\renewcommand{\bar}{\overline}
\newfont{\fnt}{cmr10 scaled 550}
\newtheorem{theorem}{Theorem}
\newtheorem{lemma}{Lemma}
\newtheorem{cor}{Corollary}
\newtheorem{prop}{Proposition}
\newtheorem{definition}{Definition}
\newtheorem{example}{Example}
\font\strange=msbm10
\renewcommand{\epsilon}{\varepsilon}
\renewcommand{\Sigma}{\varSigma}
\newcommand{\R}{{{\mathchoice  {\hbox{$\textstyle{\text{\strange R}}$}}
{\hbox{$\textstyle{\text{\strange R}}$}}
{\hbox{$\scriptstyle  N\kern-0.3em  R$}}
{\hbox{$\scriptscriptstyle  R\kern-0.2em  R$}}}}}
\newcommand{\Z}{{{\mathchoice  {\hbox{$\textstyle{\text{\strange Z}}$}}
{\hbox{$\textstyle{\text{\strange Z}}$}}
{\hbox{$\scriptstyle  Z\kern-0.3em  Z$}}
{\hbox{$\scriptscriptstyle  Z\kern-0.2em  Z$}}}}}
\newcommand{\N}{{{\mathchoice  {\hbox{$\textstyle{\text{\strange N}}$}}
{\hbox{$\textstyle{\text{\strange N}}$}}
{\hbox{$\scriptstyle  N\kern-0.3em  N$}}
{\hbox{$\scriptscriptstyle  N\kern-0.2em  N$}}}}}
\renewcommand{\phi}{\varphi}
\begin{document}

\title[Simons' type equation  for $f$-minimal hypersurfaces]{Simons' type equation  for $f$-minimal hypersurfaces and applications}


 \subjclass[2000]{Primary: 58J50;
Secondary: 58E30}

\thanks{The first and third authors are  partially supported by CNPq and Faperj of Brazil. The second author is supported by CNPq of Brazil.}

\author{Xu Cheng}
\address{Instituto de Matematica e Estatistica, Universidade Federal Fluminense,
Niter\'oi, RJ 24020, Brazil, email: xcheng@impa.br}
\author{Tito Mejia}
\address{Instituto de Matematica e Estatistica, Universidade Federal Fluminense,
Niter\'oi, RJ 24020, Brazil, email: tmejia.uff@gmail.com}
\author[Detang Zhou]{Detang Zhou}
\address{Instituto de Matematica e Estatistica, Universidade Federal Fluminense,
Niter\'oi, RJ 24020, Brazil, email: zhou@impa.br}


\newcommand{\M}{\mathcal M}

\begin{abstract}  We derive the Simons' type equation for $f$-minimal hypersurfaces in weighted Riemannian manifolds and apply it to obtain a pinching theorem for closed $f$-minimal hypersurfaces immersed in  the product manifold $\mathbb{S}^n(\sqrt{2(n-1)})\times \mathbb{R}$ with $f=\frac {t^2}{4}$.  Also we  classify closed $f$-minimal hypersurfaces with $L_f$-index one immersed  in $\mathbb{S}^n(\sqrt{2(n-1)})\times \mathbb{R}$ with the same $f$ as  above.
\end{abstract}

\maketitle
\baselineskip=18pt

\section{Introduction}\label{introduction}

In \cite{S}, Simons  proved an identity, called Simons' equation, for the Laplacian of $|A|^2$, the  square of the norm of the second fundamental form of minimal hypersurfaces in  Riemannian manifolds.  Simons' equation plays an important role  in the study of minimal hypersurfaces. Recently, Colding-Minicozzi  (\cite{CM3}, Section 10.2) derived the Simons' type equation for self-shrinkers for the  mean curvature flow in the Euclidean space $\mathbb{R}^{n+1}$ and applied it to classify complete embedded self-shrinkers of nonnegative mean curvature with polynomial volume growth. Later, using the Simons' type equation for self-shrinkers, Le-Sesum \cite{LS} and Cao-Li \cite{CL}  obtained gap theorems for self-shrinkers.

 Note that  both  minimal hypersurfaces and  self-shrinkers are  special cases of $f$-minimal hypersurfaces in the weighted Riemannian manifolds.
 In this paper, we will prove a Simons' type equation for $f$-minimal hypersurfaces in a smooth metric measure space $(M, \overline{g}, e^{-f}d\mu)$, that is, 
an identity for the weighted Laplacian $\Delta_f$ of $|A|^2$  of  $f$-minimal hypersurfaces, involving the Bakry-\'Emery Ricci curvature $\overline{Ric}_f$ (see Theorem \ref{prop2}).  Also,  we  derive the equations for the weighted Laplacian $\Delta_f$ of some other geometric quantities on $f$-minimal hypersurfaces, like the mean curvature $H$, etc, see, for instance,  Propositions \ref{prop1} and \ref{aut2}. Since these equations involve  $\overline{Ric}_f$,    we naturally would like to consider the cases in which the ambient manifold are gradient Ricci solitons, that is, $M$  satisfies $\overline{Ric}_f=C\overline{g}$, see Corollaries \ref{cor-H}, \ref{simons-eq-grad} and \ref{cor-alpha}. 
Further,  we  apply the equations mentioned above  to the special case of $f$-minimal hypersurfaces in the cylinder shrinking soliton $\mathbb{S}^n(\sqrt{2(n-1)}\times \mathbb{R}$ with $f=\frac{t^2}{4}$. Namely we  obtain the following pinching theorem.
\begin{theorem}\label{pinching}
Let  $\Sigma^{n}$ be a closed  immersed $f$-minimal  hypersurface in the product manifold $\mathbb{S}^{n}\bigr(\sqrt{2(n-1)}\bigr)\times\mathbb{R}$, $n\geq 3$, with $f=\frac{t^2}{4}$. If the square of the  norm of the second fundamental form of $\Sigma$ satisfies
\begin{equation*}
\frac14\biggr(1-\sqrt{1-\frac{8}{n-1}\alpha^2(1-\alpha^2)}\biggr)\leq |A|^2\leq \frac14\biggr(1+\sqrt{1-\frac{8}{n-1}\alpha^2(1-\alpha^2)}\biggr),
\end{equation*}
 then $\Sigma$ is $\mathbb{S}^{n}(\sqrt{2(n-1)})\times\{0\}$, where $\alpha=\langle\nu,\frac{\partial}{\partial t}\rangle$, $\nu$ is the outward unit normal to $\Sigma$ and $t$ denotes the coordinate of the factor $\mathbb{R}$ of $\mathbb{S}^{n}\bigr(\sqrt{2(n-1)}\bigr)\times\mathbb{R}$.
\end{theorem}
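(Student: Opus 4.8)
The plan is to specialize the Simons' type equation to this ambient space, convert the pinching hypothesis into the sign of a pointwise polynomial in $|A|^2$ and $\alpha$, and then integrate against the weighted measure $e^{-f}d\mu$.

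First I would record the ambient geometry. Since $\mathbb{S}^{n}(\sqrt{2(n-1)})\times\mathbb{R}$ with $f=\frac{t^2}{4}$ satisfies $\overline{\Ric}_f=\frac12\overline{g}$, it is a gradient shrinking soliton and Corollary \ref{simons-eq-grad} applies directly. I would then compute the ambient curvature contractions occurring there using the product structure: the Riemann tensor restricted to the spherical factor is that of the round sphere of constant curvature $\frac{1}{2(n-1)}$, while it vanishes on every plane containing $\partial/\partial t$. Writing $\nu=\alpha\,\frac{\partial}{\partial t}+(\text{spherical part})$, so that $|(\partial/\partial t)^{\top}|^2=1-\alpha^2$, all contractions of $\overline R$ with $\nu$ and the tangent frame are governed by the single tilt quantity $\alpha^2(1-\alpha^2)$. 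The outcome should be an identity of the schematic shape
\[
\frac12\Delta_f|A|^2=|\nabla A|^2-\Bigl(|A|^4-\tfrac12|A|^2\Bigr)-(\text{cubic term in }A)-\tfrac{c}{n-1}\,\alpha^2(1-\alpha^2),
\]
with explicit constants, the coefficient $\frac12$ being the soliton constant $\overline{\Ric}_f=\frac12\overline g$.

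Next I would control the cubic term $\mathrm{tr}(A^3)$ (and any cross terms with $H$) by an Okumura/Cauchy--Schwarz type algebraic inequality adapted to the $f$-minimal, hence \emph{non}-minimal, setting, so as to bound the right-hand side below by $|\nabla A|^2$ minus the pinching polynomial
\[
p(|A|^2,\alpha)=|A|^4-\tfrac12|A|^2+\tfrac{1}{2(n-1)}\,\alpha^2(1-\alpha^2).
\]
The stated hypothesis is exactly $p\le 0$, since the two displayed bounds on $|A|^2$ are precisely the roots of $p=0$ in the variable $|A|^2$ (their sum is $\frac12$ and their product is $\frac{1}{2(n-1)}\alpha^2(1-\alpha^2)$). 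Integrating the resulting inequality $\frac12\Delta_f|A|^2\ge|\nabla A|^2-p$ over the closed $\Sigma$ against $e^{-f}d\mu$ and using the self-adjointness $\int_\Sigma\Delta_f u\,e^{-f}d\mu=0$ yields
\[
0\ge\int_\Sigma\bigl(|\nabla A|^2-p(|A|^2,\alpha)\bigr)e^{-f}d\mu\ge 0,
\]
so that both nonnegative terms vanish: $\nabla A\equiv 0$, $p\equiv 0$, and the algebraic inequality holds with equality.

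Finally I would run the equality analysis. From $\nabla A\equiv 0$ the second fundamental form is parallel and $|A|^2$ is constant, while $p\equiv 0$ pins $|A|^2$ to a root of $p$; feeding this back into the equations for $H$ and $\alpha$ from Proposition \ref{prop1} and Corollaries \ref{cor-H} and \ref{cor-alpha}, together with $H=\langle\overline{\nabla} f,\nu\rangle=\frac{t}{2}\alpha$ and closedness, should force $\alpha^2\equiv 1$ and $|A|^2\equiv 0$, i.e.\ a totally geodesic slice with vertical normal located at $t=0$, namely $\Sigma=\mathbb{S}^{n}(\sqrt{2(n-1)})\times\{0\}$. I expect the main obstacle to be the cubic-term step: because $\Sigma$ is only $f$-minimal the mean curvature does not vanish, so the classical trace-free Okumura estimate does not apply verbatim, and recovering the sharp coefficient $\frac{1}{2(n-1)}$ together with the exact factorization into the two stated roots requires a delicate optimization balancing $\mathrm{tr}(A^3)$ and the $H$-dependent terms against the $\alpha$-dependent ambient curvature contribution.
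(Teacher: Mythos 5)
Your overall skeleton --- specialize the Simons' type equation to this gradient shrinker, recognize the pinching hypothesis as the condition $p\le 0$ for the quadratic $p(|A|^2,\alpha)=|A|^4-\tfrac12|A|^2+\tfrac{1}{2(n-1)}\alpha^2(1-\alpha^2)$ (your identification of the two stated bounds as its roots is exactly right), integrate over the closed $\Sigma$, and finish with an equality analysis --- is the paper's strategy (Proposition \ref{lem-1} and Lemma \ref{lem-A}). But the step you single out as the core of the proof would fail, and the obstacle you flag as the main one does not exist. There is no cubic term to control: in the derivation of \eqref{simons-eq}, the term $H\sum_k a_{ik}a_{kj}$ in Simons' identity \eqref{lap-a} cancels exactly against the term $-\sum_k a_{ik}a_{kj}H$ in the Hessian \eqref{Hessian-H} of $H=\langle\overline{\nabla}f,\nu\rangle$; this cancellation is automatic for $f$-minimal hypersurfaces, so no Okumura-type estimate enters anywhere and the argument consists of exact identities until the very last sign argument. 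The genuine difficulty lies elsewhere: specializing Corollary \ref{simons-eq-grad} to this ambient space gives \eqref{delta-3},
\[
\frac{1}{2}\Delta_f|A|^{2}=|\nabla A|^{2}+|A|^{2}\bigl(\tfrac12-|A|^{2}\bigr)-\tfrac{1}{n-1}\bigl(|\nabla\alpha|^{2}-\alpha^{2}|A|^{2}\bigr)-\tfrac{1}{n-1}\bigl(\alpha^{2}f-\langle\nabla\alpha^{2},\nabla f\rangle\bigr),
\]
whose right-hand side contains $-|\nabla\alpha|^2$, the unbounded term $-\alpha^2 f$ (recall $f=t^2/4$), and the sign-indefinite term $\langle\nabla\alpha^2,\nabla f\rangle$. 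No pointwise lower bound of the form $|\nabla A|^2-p(|A|^2,\alpha)$ holds, so the pointwise inequality you propose to integrate is false. What rescues the scheme is integration against $e^{-f}d\sigma$ together with two auxiliary weighted identities: $\tfrac12\Delta_f\alpha^2=|\nabla\alpha|^2-|A|^2\alpha^2$ (Proposition \ref{aut2} plus $\overline{\textrm{Ric}}_f=\tfrac12\overline{g}$), whose integral kills the block $|\nabla\alpha|^2-\alpha^2|A|^2$, and $\Delta_f f=\tfrac12(1-\alpha^2)-f$ (identity \eqref{eqs9}), which after integration by parts converts $\int_\Sigma(\alpha^2 f-\langle\nabla\alpha^2,\nabla f\rangle)e^{-f}$ into $\tfrac12\int_\Sigma\alpha^2(1-\alpha^2)e^{-f}$. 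Only this produces the sharp coefficient $\tfrac{1}{2(n-1)}$ in \eqref{lem-A-eq}; your proposal is missing precisely these two ingredients.

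Your equality analysis is also thinner than what is needed, though it is repairable. After \eqref{lem-A-eq} and the hypothesis force $\nabla A\equiv 0$ and $|A|^{2}(\tfrac12-|A|^{2})=\tfrac{1}{2(n-1)}\alpha^{2}(1-\alpha^{2})$, the paper substitutes the now-constant $H$ and $|A|^2$ into the integrated $H^2$-identity \eqref{lem-H-eq} to conclude $\alpha^2(1-\alpha^2)\equiv 0$; alternatively, closer to your sketch, constancy of $|A|^2$ makes $\alpha^2(1-\alpha^2)$ constant, and evaluating at a maximum point of $t$, where $1-\alpha^2=|\nabla t|^2=0$, shows that constant is zero. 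Either way one then argues: the closed sets $\{\alpha=0\}$ and $\{\alpha^2=1\}$ cover $\Sigma$, the maximum of $t$ forces $\{\alpha^2=1\}\ne\emptyset$, so $\alpha^2\equiv1$ by connectedness; hence $t$ is constant, $\Sigma$ lies in a horizontal slice, and Lemma \ref{ex-fmin-1} identifies $\Sigma$ with $\mathbb{S}^{n}(\sqrt{2(n-1)})\times\{0\}$. Note that the conclusion $|A|^2\equiv0$ is not an input you can extract directly from $p\equiv0$ (whose roots need not be $0$); it comes out only after the slice is identified as totally geodesic.
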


\indent Observe that $n\geq 3$ implies that $\frac{8}{n-1}\alpha^2(1-\alpha^2)\leq 1$ and hence the inequalities in Theorem \ref{pinching} make sense.  Theorem \ref{pinching} implies that 

\begin{cor}  There is no any closed immersed $f$-minimal  hypersurface in the product manifold $\mathbb{S}^{n}\bigr(\sqrt{2(n-1)}\bigr)\times\mathbb{R}$, $n\geq3$, with $f=\frac{t^2}{4}$, so that its squared norm of the second fundamental form  satisfies
\begin{equation*}
\frac14\biggr(1-\sqrt{1-\frac{2}{n-1}}\biggr)\leq |A|^2\leq \frac14\biggr(1+\sqrt{1-\frac{2}{n-1}}\biggr).
\end{equation*}
\end{cor}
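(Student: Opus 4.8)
The plan is to deduce the Corollary directly from Theorem \ref{pinching} by comparing the two pinching intervals and then extracting a contradiction. The crucial elementary observation is that, writing $u = \alpha^2 \in [0,1]$, the quantity $\alpha^2(1-\alpha^2) = u(1-u)$ attains its maximum $\tfrac14$ at $u = \tfrac12$, so that pointwise on $\Sigma$
\begin{equation*}
\frac{8}{n-1}\,\alpha^2(1-\alpha^2) \le \frac{2}{n-1}.
\end{equation*}
Consequently $\sqrt{1 - \frac{8}{n-1}\alpha^2(1-\alpha^2)} \ge \sqrt{1 - \frac{2}{n-1}}$ at every point, which means that the fixed interval appearing in the Corollary,
\begin{equation*}
\Bigl[\tfrac14\bigl(1 - \sqrt{1 - \tfrac{2}{n-1}}\bigr),\ \tfrac14\bigl(1 + \sqrt{1 - \tfrac{2}{n-1}}\bigr)\Bigr],
\end{equation*}
is contained in the (generally wider, $\alpha$-dependent) interval of Theorem \ref{pinching} at each point of $\Sigma$.

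With this inclusion in hand, I would argue by contradiction. Suppose such a hypersurface $\Sigma$ exists, i.e.\ $|A|^2$ satisfies the Corollary's pinching at every point. Then, by the interval containment just noted, $|A|^2$ also satisfies the hypothesis of Theorem \ref{pinching} pointwise, so the theorem forces $\Sigma = \mathbb{S}^{n}(\sqrt{2(n-1)})\times\{0\}$. Next I would compute $|A|^2$ for this slice: since the ambient metric is a Riemannian product and $t=0$ is a fixed level, the slice is totally geodesic in $\mathbb{S}^{n}(\sqrt{2(n-1)})\times\mathbb{R}$, so $|A|^2\equiv 0$. (Here $\nu = \partial/\partial t$, so $\alpha^2 = 1$; this is consistent with $f$-minimality because $\bar\nabla f = \tfrac{t}{2}\,\partial/\partial t$ vanishes at $t=0$.)

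The contradiction is then immediate: the lower endpoint $\tfrac14\bigl(1 - \sqrt{1 - \tfrac{2}{n-1}}\bigr)$ is strictly positive for every $n \ge 3$, so $|A|^2 = 0$ (the slice's value) cannot lie in the Corollary's interval, contradicting the assumed pinching. Hence no such $\Sigma$ exists. There is no genuine obstacle here: the only point requiring care is verifying that this lower endpoint remains strictly positive across the whole range $n \ge 3$, including the degenerate case $n = 3$, where $\sqrt{1 - \tfrac{2}{n-1}} = 0$ and the interval collapses to the single point $\{\tfrac14\}$; this is precisely what makes the slice's value $|A|^2 = 0$ incompatible with the Corollary's hypothesis.
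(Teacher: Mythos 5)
Your proposal is correct and follows essentially the same route the paper intends: since $\alpha^2(1-\alpha^2)\le\frac14$ pointwise, the Corollary's fixed pinching interval is contained in the $\alpha$-dependent interval of Theorem \ref{pinching}, so any such $\Sigma$ would have to be the totally geodesic slice $\mathbb{S}^{n}(\sqrt{2(n-1)})\times\{0\}$ with $|A|^2\equiv 0$, contradicting the strictly positive lower bound $\frac14\bigl(1-\sqrt{1-\frac{2}{n-1}}\bigr)>0$. This is precisely the (implicit) argument behind the paper's remark that the Corollary follows from Theorem \ref{pinching}.
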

 
  Next, we discuss, as another application,  the eigenvalues and the index of the operator $L_f$ on $f$-minimal hypersurfaces.  The eigenvalues of $L$-operator for self-shrinkers were discussed in \cite{CM3} and 
recently,  Hussey \cite{H} studied the index of $L$-operator for self-shrinkers in $\mathbb{R}^{n+1}$. Observe that $L$-operator is just $L_f$ operator for self-shrinkers  (see Example \ref{ex-L}). In this paper, we classify closed $f$-minimal hypersurfaces  in the cylinder shrinking soliton $\mathbb{S}^n(\sqrt{2(n-1)})\times \mathbb{R}$ whose $L_f$ operators have index one and prove that
  \begin{theorem}\label{theo6}
Let $\Sigma^{n}$ be a closed immersed $f$-minimal hypersurface in
the product manifold $\mathbb{S}^{n}(\sqrt{2(n-1)})\times\mathbb{R}$ with $f=\frac{t^2}{4}$. Then $L_f$-$\textrm{ind}(\Sigma)\geq 1$. Moreover the equality holds if and only if $\Sigma=\mathbb{S}^{n}(\sqrt{2(n-1)})\times\{0\}$.
\end{theorem}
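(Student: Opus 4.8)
The plan is to run the whole argument through the stability operator $L_f=\Delta_f+|A|^2+\overline{Ric}_f(\nu,\nu)$ and its quadratic form
\[
Q(u)=-\int_\Sigma uL_fu\,e^{-f}=\int_\Sigma\Bigl(|\nabla u|^2-\bigl(|A|^2+\tfrac12\bigr)u^2\Bigr)e^{-f},
\]
using that $L_f\text{-}\mathrm{ind}(\Sigma)$ is the maximal dimension of a subspace of $C^\infty(\Sigma)$ on which $Q$ is negative definite. On the cylinder one has $\overline{Ric}_f=\tfrac12\bar g$, so $\overline{Ric}_f(\nu,\nu)=\tfrac12$ and $L_f=\Delta_f+|A|^2+\tfrac12$. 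The lower bound $L_f\text{-}\mathrm{ind}(\Sigma)\ge1$ is then immediate by testing with the constant function: $Q(1)=-\int_\Sigma(|A|^2+\tfrac12)e^{-f}<0$, so the constants already span a one-dimensional negative subspace.

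For the slice $\Sigma_0=\mathbb{S}^{n}(\sqrt{2(n-1)})\times\{0\}$ I would verify that the index is \emph{exactly} one, which settles the ``if'' part of the equality case. Since $\Sigma_0$ is totally geodesic, $|A|\equiv0$, and since $t\equiv0$ along it, $f\equiv0$ and $\Delta_f=\Delta_{\mathbb{S}^{n}(\sqrt{2(n-1)})}$; hence $L_f=\Delta_{\mathbb{S}^{n}(\sqrt{2(n-1)})}+\tfrac12$. The eigenvalues of $-\Delta$ on $\mathbb{S}^{n}(\sqrt{2(n-1)})$ are $\lambda_k=\tfrac{k(k+n-1)}{2(n-1)}$, so those of $-L_f$ are $\lambda_k-\tfrac12$. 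As $\lambda_0=0<\tfrac12$ while $\lambda_1=\tfrac{n}{2(n-1)}>\tfrac12$, only the lowest eigenvalue is negative and it is simple; thus $L_f\text{-}\mathrm{ind}(\Sigma_0)=1$.

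The heart of the matter is the converse, for which I single out the height function $h:=t|_\Sigma$, the exact analogue of the linear function $\langle x,\nu\rangle$ on self-shrinkers. Writing $\alpha=\langle\nu,\partial_t\rangle$ with $\partial_t=\tfrac{\partial}{\partial t}$, $f$-minimality gives $H=-\langle\overline\nabla f,\nu\rangle=-\tfrac12 h\alpha$, and combining $\Delta h=\alpha H=-\tfrac12 h\alpha^2$ with $\langle\nabla f,\nabla h\rangle=\tfrac12 h(1-\alpha^2)$ yields the clean identity $\Delta_f h=-\tfrac12 h$, hence $L_fh=|A|^2h$. Now assume $L_f\text{-}\mathrm{ind}(\Sigma)=1$. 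If $h\equiv0$ then $\Sigma\subset\{t=0\}$ equals the slice and we are done; otherwise $\Delta_fh=-\tfrac12h$ forces $h$ to be nonconstant, so $1$ and $h$ are linearly independent. On $\mathrm{span}\{1,h\}$ the form $Q$ has matrix entries $Q(1,1)=-\int_\Sigma(|A|^2+\tfrac12)e^{-f}$, $Q(h,h)=-\int_\Sigma|A|^2h^2e^{-f}$ and $Q(1,h)=-\int_\Sigma|A|^2h\,e^{-f}$. Applying the weighted Cauchy--Schwarz inequality to $|A|$ and $|A|h$,
\[
\Bigl(\int_\Sigma|A|^2h\,e^{-f}\Bigr)^2\le\Bigl(\int_\Sigma|A|^2e^{-f}\Bigr)\Bigl(\int_\Sigma|A|^2h^2e^{-f}\Bigr),
\]
the determinant $Q(1,1)Q(h,h)-Q(1,h)^2$ is bounded below by $\tfrac12\bigl(\int_\Sigma e^{-f}\bigr)\bigl(\int_\Sigma|A|^2h^2e^{-f}\bigr)$, which is strictly positive as soon as $\int_\Sigma|A|^2h^2e^{-f}>0$. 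In that case $Q$ is negative definite on a two-dimensional space, contradicting the index being one; therefore $|A|\,h\equiv0$.

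It remains to derive the slice from $|A|\,h\equiv0$ and $h\not\equiv0$. On the nonempty open set $\{h\neq0\}$ one has $|A|\equiv0$, so $H=0$ there, and $H=-\tfrac12 h\alpha$ forces $\alpha\equiv0$, whence $|\nabla h|^2=1-\alpha^2=1$; thus $h$ has no critical point where $h\neq0$. But $\Sigma$ is closed, so $h$ attains its maximum and minimum at critical points, which must therefore lie in $\{h=0\}$, giving $\max h=\min h=0$ and hence $h\equiv0$, a contradiction. So the case $h\not\equiv0$ cannot occur, and $\Sigma$ is the slice. The main obstacle I anticipate is pinning down the sign conventions that make the identity $\Delta_f h=-\tfrac12 h$ (equivalently $L_fh=|A|^2h$) exact, since the entire rigidity hinges on $h$ being a genuine test function with this clean behaviour; once that identity is secured, the Cauchy--Schwarz dichotomy and the maximum-principle argument are routine.
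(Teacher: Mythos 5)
Your proof is correct, but it takes a genuinely different route from the paper's on both halves of the theorem. For the lower bound, the paper does not test with constants: it uses the function $\alpha=\langle\frac{\partial}{\partial t},\nu\rangle$, which by its Proposition \ref{aut2} on parallel vector fields satisfies $L_f\alpha=\overline{\textrm{Ric}}_f(\frac{\partial}{\partial t},\nu)=\frac12\alpha$, i.e.\ $\alpha$ is an eigenfunction with eigenvalue $-\frac12$; closedness and $|\nabla t|^2=1-\alpha^2$ give $\alpha(p)=\pm1$ at an extremum of $t$, so $\alpha\not\equiv 0$ and the index is at least one. For the equality case the paper argues spectrally: index one forces $-\frac12$ to be the first eigenvalue, so the eigenfunction $\alpha$ cannot change sign; then $\Delta_f\alpha=-|A|^2\alpha\le 0$ and the maximum principle on the closed $\Sigma$ make $\alpha$ constant, hence $\alpha\equiv 1$, $\nabla t\equiv 0$, and Lemma \ref{ex-fmin-1} identifies the slice. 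You instead exploit the height function $h=t|_\Sigma$ and the identity $\Delta_f h=-\frac12 h$, i.e.\ $L_fh=|A|^2h$; this identity is indeed correct and convention-independent (it follows from the paper's own formula $\Delta u=\sum_i(\overline{\nabla}^2u)_{ii}-Hu_\nu$ applied to $u=t$, together with $H=\frac{t\alpha}{2}$ and $|\nabla t|^2=1-\alpha^2$, so your worry about signs is unfounded), and your two expressions for $Q(1,h)$ are consistent because $\int_\Sigma h\,e^{-f}d\sigma=0$, which follows by integrating $\Delta_fh=-\frac12h$. You then replace the first-eigenfunction and maximum-principle steps by the two-dimensional test space $\text{span}\{1,h\}$, a Cauchy--Schwarz determinant estimate forcing $|A|h\equiv0$, and an elementary critical-point argument forcing $h\equiv 0$. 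What each approach buys: the paper's argument produces an explicit eigenvalue and a sign-based rigidity that transfers to the complete noncompact case (as in \cite{CZ2}), while yours is more elementary — it avoids simplicity of the first eigenvalue, the sign of its eigenfunction, and the maximum principle — at the cost of leaning on the special algebraic structure of the cylinder. One small point: in your case $h\equiv 0$ you assert in one line that $\Sigma$ "equals the slice"; an immersed closed $n$-manifold inside $\mathbb{S}^n\times\{0\}$ is a priori only a covering of $\mathbb{S}^n$, and identifying it with the sphere uses simple connectedness, exactly as in the paper's Lemma \ref{ex-fmin-1}, so you should cite or reproduce that argument.
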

  
For complete noncompact $f$-minimal hypersurfaces in cylinder shrinking soliton $\mathbb{S}^n(\sqrt{2(n-1)})\times \mathbb{R}$ with $f=\frac{t^2}{4}$, the first and third authors \cite{CZ2} proved the results corresponding  to Theorems \ref{pinching} and \ref{theo6}. In this case complete noncompact $f$-minimal hypersurfaces are assumed to have polynomial volume growth, which is equivalent to properness of  immersion, or finiteness of  weighted volume (see \cite{XZ} and \cite{CMZ2}).

The rest of this paper is organized as follows: In Section \ref{notation}  some definitions and notations are given; In Section \ref{relation}  we prove the Simons' type equation and the equations for $\Delta_f$ of other geometric quantities; In Section \ref{sec-index}, we calculate the index of $L_f$ operator on closed $f$-minimal hypersurfaces in the cylinder shrinking soliton $\mathbb{S}^n(\sqrt{2(n-1)})\times \mathbb{R}$; In Section \ref{sec-appl}, we prove Theorem \ref{pinching}  for closed $f$-minimal hypersurfaces in the cylinder shrinking soliton $\mathbb{S}^n(\sqrt{2(n-1)})\times \mathbb{R}$.

\section{Definitions and notation}\label{notation}

Let  $(M^{n+1}, \bar{g}, e^{-f}d\mu)$ be a smooth metric measure space, which is an $(n+1)$-dimensional Riemannian manifold $(M^{n+1}, \bar{g})$ together with a weighted volume form $e^{-f}d\mu$ on $M$, where $f$ is a smooth function on $M$ and $d\mu$ the volume element induced by the metric $\bar{g}$.   In this paper, unless otherwise specified, we denote by a bar all quantities on $(M, \bar{g})$, for instance by $\overline{\nabla}$ and $\overline{\textrm{Ric}}$, 
 the Levi-Civita connection  and  the  Ricci curvature tensor of $(M, \bar{g})$ respectively.
For $(M, \bar{g}, e^{-f}d\mu)$, the $\infty$-Bakry-\'Emery Ricci curvature tensor $\overline{\text{Ric}}_{f}$ (for simplicity, Bakry-\'Emery Ricci curvature), which  is defined by
$$
\overline{\text{Ric}}_{f }:=\overline{\text{Ric}}+\overline{\nabla}^{2}f,
$$
where  $\overline{\nabla}^{2}f$ is the Hessian of $f$ on $M$.
If $f$ is constant,   $\overline{\textrm{Ric}}_{f}$ is the  Ricci curvature  $\overline{\textrm{Ric}}$.

Now  let $i: \Sigma^{n}\to M^{n+1}$ be an $n$-dimensional smooth immersion.
Then $i$ induces a metric $g=i^*\bar{g}$ on $\Sigma$ so that $i: (\Sigma^{n}, g) \to (M, \bar{g})$ is an isometric immersion.  We will denote for instance by ${\nabla}$,  $\textrm{Ric}$, $\Delta$ and $d\sigma$, 
 the Levi-Civita connection, the  Ricci curvature tensor,  the Laplacian, and the volume element of $(\Sigma, g)$ respectively.

The restriction of the function $f$ on $\Sigma$, still denoted by $f$,   yields a weighted measure  $e^{-f}d\sigma$ on   $\Sigma$ and hence   an induced smooth metric measure space $(\Sigma^{n}, g, e^{-f}d\sigma)$. The associated weighted Laplacian ${\Delta}_{f}$  on  $\Sigma$ is defined  by
$${\Delta}_{f}u:={\Delta} u-\langle{\nabla} f,{\nabla} u\rangle.$$
 The second order operator ${\Delta}_f$  is a self-adjoint operator on $L^2(e^{-f}d\sigma)$, the space  of square integrable functions on $\Sigma$ with respect to the measure $e^{-f}d\sigma$.

 We define the second fundamental form $A$  of $\Sigma$  by 
$$A: T_p\Sigma\times T_p\Sigma\to \mathbb{R}, \quad  A(X,Y)=-\langle\overline{\nabla}_XY, \nu\rangle, $$
\noindent where $p\in \Sigma, X, Y\in T_p \Sigma$, $\nu$
is a unit normal vector at $p$.

 In a local orthonormal  system $\{e_i\}, i=1,\ldots, n$ of $\Sigma$, the components of $A$ are denoted by $a_{ij}=A(e_i,e_j)=\langle\overline{\nabla}_{e_i}\nu, e_j\rangle$.
The shape operator  $A$ and the mean curvature $H$ of $\Sigma$ are   defined by:  $$A: T_p\Sigma\to T_p\Sigma, AX=\overline{\nabla}_X\nu, X\in T_p\Sigma;$$  $$H=\text{tr}A=\displaystyle\sum_{i=1}^na_{ii}.$$   
With the above notations, we  have the following 
\begin{definition} The weighted mean curvature $H_f$ of the hypersurface $\Sigma$ is defined by  $$H_f=H-\langle \overline{\nabla} f,\nu\rangle.$$ $\Sigma$ is called to be an $f$-minimal hypersurface if it satisfies
\begin{equation} H=\langle \overline{\nabla} f,\nu\rangle.
\end{equation}
\end{definition}

\begin{definition}The weighted volume  of $\Sigma$ is defined by
\begin{equation}V_f(\Sigma):=\int_\Sigma e^{-f}d\sigma.
\end{equation}

\end{definition}
\noindent It is known that $\Sigma$ is $f$-minimal if and only if it is a critical point of the weighted volume functional. On the other hand, we can view it in another manner:
$\Sigma $ is $f$-minimal in $(M,\overline{g})$ is equivalent to that   $(\Sigma, i^*\tilde{g})$ is minimal in $(M,\tilde{g})$, where the  conformal metric $\tilde{g}=e^{-\frac{2f}{n}}\bar {g}$ (cf \cite{CMZ2}).

Now we assume that  $\Sigma$ is a two-sided hypersurface, that is, there  is a globally-defined unit normal $\nu$ on $\Sigma$.

\begin{definition} For  a two-sided hypersurface $\Sigma$,   the $L_f$ operator on $\Sigma$ is given by $$L_f:=\Delta_f+|A|^2+\overline{\text{Ric}}_f(\nu,\nu),$$ where $|A|^2$ denotes the square of the norm of the second fundamental form $A$ of $\Sigma$.  

The operator $L_{f}=\Delta_{f}+|A|^2+\overline{\textrm{Ric}}_{f}(\nu,\nu)$ is called $L_{f}$-stability operator of  $\Sigma$.
\end{definition}

\begin{example}\label{ex-L} For self-shrinkers in $\mathbb{R}^{n+1}$,  the operator $L_f$, where $f=\frac{|x|^2}{4}$, is just  the $L$ operator in \cite{CM3}:
\begin{equation*}
L=\Delta-\frac{1}{2}\langle x,\nabla\cdot\rangle+|A|^{2}+\frac{1}{2}.
\end{equation*}
\end{example}

\begin{definition}A two-sided $f$-minimal hypersurface  $\Sigma$ is said to be $L_{f}$-stable if  for any  compactly supported  smooth function $\phi\in C_o^{\infty}(\Sigma)$, it holds that
\begin{equation}\label{Lf-stab}
-\int_{\Sigma}\phi L_f\phi e^{-f}d\sigma\geq 0,
\end{equation}
or equivalently
\begin{equation}\int_{\Sigma}\big[|\nabla\phi|^2-(|A|^{2}+\overline{\textrm{Ric}}_{f}(\nu,\nu))\phi^2\big]e^{-f}d\sigma\geq 0.
\end{equation}
\end{definition}

\noindent $L_f$-stability  means that the second variation of the weighted volume of $f$-minimal hypersurface $\Sigma$ is non-negative. 
 Further, one has the definition of $L_f$-index of $f$-minimal hypersurfaces. Since $\Delta_f$ is self-adjoint in the weighted space $L^2(e^{-f}d\sigma)$, we may define a symmetric bilinear form on space $C_o^{\infty}(\Sigma)$  of compactly supported smooth functions on $\Sigma$ by, for $\phi, \psi\in C_o^{\infty}(\Sigma)$,
\begin{equation}\begin{split}
B_f(\phi, \psi):&=-\int_\Sigma \phi L_f\psi e^{-f}d\sigma\\
&=\int_{\Sigma}[\langle\nabla\phi,\nabla \psi\rangle-(|A|^{2}+\overline{\textrm{Ric}}_{f}(\nu,\nu))\phi\psi]e^{-f}d\sigma.
\end{split}
\end{equation}
\begin{definition}The $L_{f}$-index of $\Sigma$, denoted by $L_{f}$-ind$(\Sigma)$,  is defined to be the maximum of the dimensions of negative definite  subspaces for $B_f$. 
\end{definition}

\noindent In particular, $\Sigma$ is $L_f$-stable if and only if $L_{f}$-ind$(\Sigma)=0$. 
 $L_f$-index of $\Sigma$ has the following  equivalent definition:
 Consider the Dirichlet eigenvalue problems of $L_f$ on a compact domain $\Omega\subset \Sigma$: 
 $$L_fu=-\lambda u, u\in\Omega;  \quad u|_{\partial \Omega}=0.$$
 $L_{f}$-ind$(\Sigma)$ is defined to be the  supremum over compact domains of $\Sigma$ of the number of negative  (Dirichlet) eigenvalues of $L_f$ (cf \cite{F}).

It is known that an   $f$-minimal hypersurface  $(\Sigma,{g})$ is $L_f$-stable if and only if  $(\Sigma, i^*\tilde{g})$ is stable as a minimal surface in $(M,\tilde{g})$. Further, the Morse index of $L_f$ on $(\Sigma,g)$ is equal to the Morse index of Jacobi  operator on minimal hypersurface $(\Sigma, i^*\tilde{g})$ (see \cite{CMZ2}).

We will take  the following convention for  tensors. For instance, under a local frame field on $M$, suppose that $T=(T_{j_1\ldots j_r})$ is a $(r,0)$-tensor on $M$. 
 The components of the covariant derivative $\overline{\nabla}T$ are denoted by $T_{j_1\ldots j_r;i}$, that is,
$$T_{j_1\ldots j_r;i} =(\overline{\nabla}_{e_i}T)(e_{j_1},\ldots,e_{j_r})=(\overline{\nabla}T)(e_i, e_{j_1},\ldots,e_{j_r}).$$
Meanwhile,  under a local frame field on $\Sigma$, suppose that $S=(S_{k_1\ldots k_s})$ is an $(s,0)$-tensor on $\Sigma$. The components of the covariant derivative ${\nabla}S$ are denoted by $S_{k_1\ldots k_s,l}$, that is,
$$S_{k_1\ldots k_s,l} =({\nabla}_{e_l}S)(e_{k_1},\ldots,e_{k_s})=({\nabla}S)(e_l, e_{k_1},\ldots,e_{k_s}).$$
Throughout this paper,  we assume that the $f$-minimal hypersurfaces are orientable and without boundary. For a closed hypersurface, we choose $\nu$ to be outer unit normal.
In the final of this section, we refer the interested readers to \cite{CMZ1}, \cite{CMZ2}, \cite{E}  and the references therein for  examples and more details about $f$-minimal hypersurfaces.

\section{Simons' type equation for   $f$-minimal hypersurfaces}\label{relation}

First we calculate the weighted Laplacian $\Delta_f$ for  mean curvature $H$ of $f$-minimal hypersurfaces..

\begin{prop}\label{prop1}
Let $(\Sigma^{n}, g)$ be an $f$-minimal hypersurface isometrically immersed in a smooth metric measure space $(M,\bar{g},e^{-f}d\mu)$. Then the  mean curvature $H$ of $\Sigma$ satisfies that 
\begin{eqnarray}\label{f-lap-H}
\Delta_fH
&=&2\textrm{tr}_g(\overline{\nabla}^3f(\cdot,\nu, \cdot)|_{\Sigma})-\text{tr}_g(\overline{\nabla}^3f(\nu, \cdot, \cdot)|_{\Sigma})\\
&&+2\langle A,  \overline{\nabla}^2f|_{\Sigma}\rangle_g -\overline{\textrm{Ric}}_{f}(\nu,\nu)H-|A|^{2}H\nonumber,
\end{eqnarray}
or equivalently
\begin{eqnarray}\label{f-lap-H-1}
\Delta_fH&=&2\sum_{i=1}^{n}(\overline{\nabla}^3f)_{i\nu i}-\sum_{i=1}^n(\overline{\nabla}^3f)_{\nu i i}+2\sum_{i,j=1}^{n}a_{ij}(\overline{\nabla}^2f)_{ij}\\
&&-\overline{\textrm{Ric}}_{f}(\nu,\nu)H-|A|^{2}H,\nonumber
\end{eqnarray}
where $\{e_1,\ldots, e_n\}$ is a local orthonormal frame field on $\Sigma$, $\nu$ denotes the unit normal to $\Sigma$, and $|_{\Sigma}$ denotes the restriction to $\Sigma$.
\end{prop}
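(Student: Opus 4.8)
The plan is to compute $\Delta_f H = \Delta H - \langle \nabla f, \nabla H\rangle$ by first establishing the analogous Simons-type identity in the unweighted (classical) setting and then correcting by the drift term. First I would recall the classical identity for the Laplacian of the mean curvature of a hypersurface in a general ambient manifold, which comes from the Codazzi equation together with commuting covariant derivatives. Specifically, starting from $a_{ij;k} - a_{ik;j} = -\overline{R}_{\nu i j k}$ (Codazzi), tracing appropriately and invoking the definition $H = \langle \overline{\nabla} f, \nu\rangle$ (the $f$-minimality condition), I can express $\Delta H$ and $\nabla_i H$ in terms of derivatives of the shape operator. The key structural point is that the $f$-minimal condition lets me substitute the mean curvature $H$ by the normal derivative of $f$ wherever it appears, which is precisely what will generate the $\overline{\nabla}^3 f$ and $\overline{\nabla}^2 f$ terms on the right-hand side.

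The main computation proceeds as follows. I would differentiate the constraint $H = \langle \overline{\nabla} f, \nu\rangle$ tangentially. A first tangential derivative along $e_k$ gives $\nabla_k H$ in terms of $(\overline{\nabla}^2 f)(e_k,\nu)$ and a term $a_{kj}\,\partial_{e_j} f$ coming from differentiating $\nu$ via the Weingarten map $\overline{\nabla}_{e_k}\nu = A e_k$. Taking a second tangential derivative and tracing yields $\Delta H$, which will contain the third covariant derivatives $(\overline{\nabla}^3 f)_{i\nu i}$ and $(\overline{\nabla}^3 f)_{\nu i i}$ after commuting the normal and tangential directions, together with Hessian-times-second-fundamental-form terms $a_{ij}(\overline{\nabla}^2 f)_{ij}$. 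Here care is needed to track the curvature terms that arise from commuting $\overline{\nabla}$ in different orders; these, after tracing against the second fundamental form and using $f$-minimality, reorganize into the Ricci term $\overline{\text{Ric}}(\nu,\nu)H$. I would then absorb the remaining $(\overline{\nabla}^2 f)(\nu,\nu)H$ contribution together with $\overline{\text{Ric}}(\nu,\nu)H$ into the single Bakry-\'Emery term $\overline{\text{Ric}}_f(\nu,\nu)H = (\overline{\text{Ric}} + \overline{\nabla}^2 f)(\nu,\nu)\,H$, which is the natural grouping that makes the weighted identity clean.

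Finally I would handle the drift correction $-\langle \nabla f, \nabla H\rangle$. Using the first-derivative formula for $\nabla_k H$ obtained above, the inner product $\langle \nabla f, \nabla H\rangle$ produces exactly the terms needed to complete the symmetrization in the third derivatives of $f$ and to supply the $|A|^2 H$ term; indeed the Weingarten contribution $a_{kj}\,\partial_{e_j}f$ contracted against $\nabla_k f$, after using $f$-minimality to replace the normal component, contributes the $-|A|^2 H$ term. The two displayed forms (\ref{f-lap-H}) and (\ref{f-lap-H-1}) are then just coordinate-free versus orthonormal-frame expressions of the same result, so once the frame computation is complete the equivalence is immediate by the tensor conventions fixed in Section \ref{notation}.

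I expect the main obstacle to be the careful bookkeeping when commuting the mixed third covariant derivatives of $f$ and when splitting directional derivatives of $f$ into tangential and normal parts at each stage; the asymmetry between the two third-derivative traces, $2(\overline{\nabla}^3 f)_{i\nu i}$ versus $(\overline{\nabla}^3 f)_{\nu i i}$, is exactly where an ambient curvature term is hidden via the commutation formula, and getting the sign and the factor of $2$ correct there is the delicate part. Everything else is a matter of organizing the Hessian and shape-operator contractions so that they group into the Bakry-\'Emery Ricci curvature.
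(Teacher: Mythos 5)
Your plan follows the same route as the paper: differentiate the constraint $H=\langle\overline{\nabla}f,\nu\rangle$ twice tangentially in an adapted frame, use the Codazzi equation and the commutation rule for third covariant derivatives of $f$, trace, and group $\overline{\textrm{Ric}}+\overline{\nabla}^2f$ into $\overline{\textrm{Ric}}_f$. But your bookkeeping for the drift term contains a genuine error that would derail the computation. From your own first-derivative formula, $\nabla_kH=(\overline{\nabla}^2f)_{\nu k}+\sum_{j}a_{kj}f_j$, where $f_j=\langle\overline{\nabla}f,e_j\rangle$ are \emph{tangential} components, one gets
\begin{equation*}
\langle\nabla f,\nabla H\rangle=\sum_{k}f_k(\overline{\nabla}^2f)_{\nu k}+\sum_{j,k}a_{jk}f_jf_k,
\end{equation*}
and the second summand is $A(\nabla f,\nabla f)$, a quadratic form in the tangential gradient of $f$. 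No normal component of $\overline{\nabla}f$ appears in it, so $f$-minimality ($f_\nu=H$) cannot convert it into $-|A|^2H$ as you claim; nor does the drift produce the symmetrized third-derivative terms. If you carry out your plan literally, you will end up with an uncancelled $A(\nabla f,\nabla f)$ and no source for the $-|A|^2H$ term.

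The correct accounting, which is what the paper does, is as follows. The term $-|A|^2H$ is generated \emph{inside} $\Delta H$: in the second tangential derivative, the piece $e_j\langle\overline{\nabla}f,e_k\rangle=(\overline{\nabla}^2f)_{jk}+\langle\overline{\nabla}f,\overline{\nabla}_{e_j}e_k\rangle=(\overline{\nabla}^2f)_{jk}-a_{jk}f_\nu$ uses $f_\nu=H$, and contracting with $a_{ik}$ and tracing gives $-|A|^2H$ (plus one of the two copies of $\sum a_{ij}(\overline{\nabla}^2f)_{ij}$). The factor $2$ on $\sum_i(\overline{\nabla}^3f)_{i\nu i}$ arises by combining the direct term $(\overline{\nabla}^3f)_{j\nu i}$, from differentiating the Hessian, with the term $(\overline{\nabla}^3f)_{i\nu j}$ produced by the commutation identity $(\overline{\nabla}^2f)_{\nu j;i}=(\overline{\nabla}^2f)_{ji;\nu}+\sum_kf_k\bar{R}_{kj\nu i}$ applied to the curvature term $\sum_k\bar{R}_{\nu ikj}f_k$ that Codazzi creates; this same identity also yields $-H\bar{R}_{\nu i\nu j}$, whose trace combines with the $-H(\overline{\nabla}^2f)_{\nu\nu}$ contribution to give $-\overline{\textrm{Ric}}_f(\nu,\nu)H$. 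Finally, Codazzi leaves behind $\sum_ka_{ij,k}f_k$, whose trace is exactly $+\langle\nabla f,\nabla H\rangle$, and the sole role of the drift $-\langle\nabla f,\nabla H\rangle$ in $\Delta_f$ is to cancel it. Once you reassign the terms this way, your argument coincides with the paper's proof.
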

 \begin{proof} We  choose  a local  orthonormal frame filed $\{e_i\}_{i=1}^{n+1}$  for $M$ 
so that,   restricted to $\Sigma$,  $\{e_i\}_{i=1}^{n}$ are tangent  to $\Sigma$,  and $e_{n+1}=\nu$ is the unit normal to $\Sigma$.  {\it Throughout this paper,  for simplicity of notation, we substitute $\nu$ for the subscript $n+1$ in the components of the tensors on $M$, for instance $\bar{R}_{\nu ikj}=\bar{Rm}(\nu, e_i, e_k, e_j)$, $(\overline{\nabla}^2f)_{\nu i}=(\overline{\nabla}^2f)(\nu, e_i)$}.
 Differentiating  the mean curvature $H=\langle\overline{\nabla}f,\nu\rangle$, we have, for $1\leq i\leq n$,
\begin{eqnarray}\label{der-H}
e_iH&=&e_i\langle\overline{\nabla}f,\nu\rangle\\
&=&\langle\overline{\nabla}_{e_i}(\overline{\nabla}f),\nu\rangle+\langle\overline{\nabla}f,\overline{\nabla}_{e_i}\nu\rangle\nonumber\\
&=&\overline{\nabla}^2f(\nu, e_i)+\sum_{k=1}^na_{ik}\langle\overline{\nabla}f,e_k\rangle.\nonumber
\end{eqnarray}
Then for $1\leq i, j\leq n$,
\begin{equation}\label{eqs1}
e_je_i(H)=e_j(\overline{\nabla}^2f(\nu, e_i))+\sum_{k=1}^ne_j(a_{ik})f_k+\sum_{k=1}^na_{ik}(e_j\langle\overline{\nabla}f,e_k\rangle).
\end{equation}
For simplicity of calculation. For   an fixed point $p\in\Sigma$, we may further  choose the local orthonormal frame $\{e_1,\ldots,e_n\}$ so that  $\nabla_{e_i}e_j( p)=(\overline{\nabla}_{e_i}e_j)^{\top}(p)=0$, $1\leq i,j\leq n$. 
Then at $p$, for $1\leq i, j\leq n$,
\begin{eqnarray}
&&e_j[\overline{\nabla}^2f(\nu, e_i)]\nonumber \\
&=&\overline{\nabla}_{e_j}[\overline{\nabla}^2f(\nu, e_i)]\nonumber\\
&=&\overline{\nabla}_{e_j}(\overline{\nabla}^2f)(\nu, e_i)+\overline{\nabla}^2f(\overline{\nabla}_{e_j}\nu,e_i)+
\overline{\nabla}^2f(\nu,\overline{\nabla}_{e_j}e_i)\label{H-term1-1}\\
&=&\overline{\nabla}^3f(e_j,\nu,e_i)+\sum_{k=1}^na_{jk}\overline{\nabla}^2f(e_k,e_i)+
\overline{\nabla}^2f(\nu, \langle\overline{\nabla}_{e_j}e_i,\nu)\rangle\nu)\nonumber\\
&=&(\overline{\nabla}^3f)_{j\nu i}+\sum_{k=1}^na_{jk}(\overline{\nabla}^2f)_{ki}-a_{ji}(\overline{\nabla}^2f)_{\nu\nu}.\nonumber
\end{eqnarray}
In the last second equality of (\ref{H-term1-1}), we used the assumption: $\nabla_{e_j}e_i( p)=0$,  $1\leq i,j\leq n$. Also by this assumption and the Codazzi equation, we have that, at $p$ for $1\le i,j\le n, $
\begin{eqnarray*}
e_{j}(a_{ik})&=&a_{ik,j}=a_{ij,k}+\bar{R}_{\nu ikj};\\
e_j\langle\overline{\nabla}f,e_k\rangle &=&\langle\overline{\nabla} _{e_j}(\overline{\nabla}f),e_k\rangle+\langle\overline{\nabla}f, \overline{\nabla} _{e_j}e_k\rangle=(\overline{\nabla}^2f)_{jk}-a_{jk}{f}_\nu;\\
(\nabla^2 H)({e_j}, {e_i})&=&e_je_iH.
\end{eqnarray*}
Substituting these equalities and \eqref{H-term1-1} into \eqref{eqs1}, we have at $p$, for $1\leq i,j\leq n$,
\begin{eqnarray}
(\nabla^2H)(e_j,e_i)&=&(\overline{\nabla}^3f)_{j\nu i}+\sum_{k=1}^na_{jk}(\overline{\nabla}^2f)_{ki}-a_{ji}(\overline{\nabla}^2f)_{\nu\nu}\nonumber\\
&&+\sum_{k=1}^{n}a_{ij,k}f_k+\sum_{k=1}^{n}\bar{R}_{\nu ikj}f_k\label{eq1}\\
&&+\sum_{k=1}^{n}a_{ik}(\overline{\nabla}^2f)_{jk}-\sum_{k=1}^{n}a_{ik}a_{jk}{f}_\nu. \nonumber
\end{eqnarray}
On the other hand,  it holds that on $\Sigma$:
\begin{eqnarray*}
(\overline{\nabla}^3f)_{i\nu j}&=&(\overline{\nabla}^2f)_{\nu j;i}=(\overline{\nabla}^2f)_{j\nu;i}\\
&=&(\overline{\nabla}^2f)_{ji;\nu}+\sum_{k=1}^{n+1}{f}_{k}\bar{R}_{k j\nu i}\\
&=&(\overline{\nabla}^3f)_{\nu ji}+{f}_{\nu}\bar{R}_{\nu i \nu j}+\sum_{k=1}^nf_k\bar{R}_{\nu ikj}.
\end{eqnarray*}
So
\begin{equation}\label{eqs0}
\sum_{k=1}^nf_k\bar{R}_{\nu ikj}=(\overline{\nabla}^3f)_{i\nu j}-(\overline{\nabla}^3f)_{\nu ji}-{f}_{\nu}\bar{R}_{\nu i \nu j}.
\end{equation}
Substituting~\eqref{eqs0} into~\eqref{eq1} and noting $f_{\nu}=H$, we have at $p$, for $1\leq i,j\leq n$,
\begin{eqnarray}\label{Hessian-H} 
(\nabla^2H)(e_j,e_i)&=&(\overline{\nabla}^3f)_{j\nu i}+(\overline{\nabla}^3f)_{i\nu j}-(\overline{\nabla}^3f)_{\nu ji}\\
&&+\sum_{k=1}^na_{jk}(\overline{\nabla}^2f)_{ki}
+\sum_{k=1}^na_{ik}(\overline{\nabla}^2f)_{jk}+\sum_{k=1}^{n}a_{ij,k}f_k\nonumber\\
&&-H\bar{R}_{i\nu j\nu}-a_{ji}(\overline{\nabla}^2f)_{\nu\nu}-\sum_{k=1}^{n}a_{ik}a_{kj}H.\nonumber
\end{eqnarray}
Taking the trace, we have that at $p$,
\begin{eqnarray}\label{eqs2-1}
\Delta H
&=&2\sum_{i=1}^{n}(\overline{\nabla}^3f)_{i\nu i}-\sum_{i=1}^{n}(\overline{\nabla}^3f)_{\nu ii}+2\sum_{i,k=1}^na_{ik}(\overline{\nabla}^2f)_{ki}\\
& &+\langle\nabla f,\nabla H\rangle-\overline{\textrm{Ric}}_{f}(\nu,\nu)H-|A|^{2}H.\nonumber
\end{eqnarray}
Since $p\in\Sigma$ is arbitrary and \eqref{eqs2-1} is independent of the choice of frame, \eqref{eqs2-1} holds on $\Sigma$. By \eqref{eqs2-1} and  $\Delta_f=\Delta-\langle \nabla f,\nabla H\rangle$, we obtain   \eqref{f-lap-H-1} and also equivalent identity \eqref{f-lap-H}.

\end{proof}
Proposition \ref{prop1} yields the following
\begin{cor} \label{cor-H} With the same assumption and notation as in Proposition \ref{prop1}, 
\begin{equation}
L_fH
=2\text{tr}_g(\overline{\nabla}^3f(\cdot,\nu, \cdot)|_{\Sigma})-\text{tr}_g(\overline{\nabla}^3f(\nu, \cdot, \cdot)|_{\Sigma})\\
+2\langle A,  \overline{\nabla}^2f|_{\Sigma}\rangle_g,
\end{equation}
or equivalently
\begin{equation}
L_fH=2\sum_{i=1}^{n}(\overline{\nabla}^3f)_{i\nu i}-\sum_{i=1}^n(\overline{\nabla}^3f)_{\nu i i}+2\sum_{i,j=1}^{n}a_{ij}(\overline{\nabla}^2f)_{ij}.
\end{equation}
\end{cor}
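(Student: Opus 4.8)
The plan is to read this identity off directly from Proposition \ref{prop1} by invoking the definition of the operator $L_f$. Recall that $L_f=\Delta_f+|A|^2+\overline{\textrm{Ric}}_f(\nu,\nu)$, so that applying $L_f$ to the scalar function $H$ gives
\[
L_f H=\Delta_f H+\bigl(|A|^2+\overline{\textrm{Ric}}_f(\nu,\nu)\bigr)H.
\]
First I would substitute the formula \eqref{f-lap-H} for $\Delta_f H$ supplied by Proposition \ref{prop1} into the right-hand side of this expression.

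The key observation is that the right-hand side of \eqref{f-lap-H} contains precisely the two zeroth-order terms $-\overline{\textrm{Ric}}_f(\nu,\nu)H$ and $-|A|^2H$. When these are added to the curvature term $\bigl(|A|^2+\overline{\textrm{Ric}}_f(\nu,\nu)\bigr)H$ coming from the definition of $L_f$, they cancel exactly, leaving only the three $f$-derivative terms
\[
2\,\textrm{tr}_g\bigl(\overline{\nabla}^3f(\cdot,\nu,\cdot)|_\Sigma\bigr)-\textrm{tr}_g\bigl(\overline{\nabla}^3f(\nu,\cdot,\cdot)|_\Sigma\bigr)+2\langle A,\overline{\nabla}^2f|_\Sigma\rangle_g.
\]
This is exactly the asserted identity; the equivalent component form follows in the same way by substituting \eqref{f-lap-H-1} instead of \eqref{f-lap-H}.

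There is essentially no obstacle here: the corollary is a one-line algebraic consequence of Proposition \ref{prop1} together with the definition of $L_f$, and its entire content is the cancellation of the two zeroth-order terms. The only point worth verifying is that the sign and index conventions for $\overline{\nabla}^2f$, $\overline{\nabla}^3f$, $A$ and $\overline{\textrm{Ric}}_f$ agree between the definition of $L_f$ and the statement of Proposition \ref{prop1}; since these are all fixed consistently in Section \ref{notation}, no additional computation is required.
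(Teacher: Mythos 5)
Your proposal is correct and is exactly the paper's argument: the paper offers no separate proof, simply noting that Proposition \ref{prop1} yields the corollary, which is precisely your observation that adding $(|A|^2+\overline{\textrm{Ric}}_f(\nu,\nu))H$ to \eqref{f-lap-H} cancels the two zeroth-order terms. Nothing further is needed.
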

Next we will derive the Simons' type equation for $f$-minimal hypersurfaces.
\begin{theorem}\label{prop2}
Let $(\Sigma^{n},g)$ be an $f$-minimal hypersurface isometrically immersed in $(M,\overline{g},e^{-f}d\mu)$. Then the   square of norm of the second fundamental form of $\Sigma$ satisfies that
\begin{eqnarray}\label{simons-eq}
\frac{1}{2}\Delta_{f}|A|^{2}&=&|\nabla A|^{2}+2\sum_{i,j,k=1}^{n}a_{ij}a_{ik}(\overline{Ric}_f)_{jk}-(\overline{Ric}_f)_{\nu\nu}|A|^{2}-|A|^{4}\nonumber\\
& &+2\sum_{i,j=1}^na_{ij}(\overline{Ric}_f)_{i\nu;j}
-\sum_{i,j=1}^{n}a_{ij}(\overline{Ric}_f)_{ij;\nu}+\sum_{i,j=1}^{n}a_{ij}\bar{R}_{i\nu j\nu;\nu}\\
& &-2\sum_{i,j,k=1}^{n}a_{ij}a_{ik}\bar{R}_{j\nu k\nu}-2\sum_{i,j,k,l=1}^{n}a_{ij}a_{lk}\bar{R}_{iljk},\nonumber
\end{eqnarray}
where the notations are same as in Proposition \ref{prop1}.
\end{theorem}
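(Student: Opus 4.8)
The plan is to mimic Simons' original computation for minimal hypersurfaces, but carry the weight function $f$ and the Bakry-\'Emery correction terms through every step. First I would start from the Simons-type identity for the ordinary Laplacian of $|A|^2$, which comes from commuting covariant derivatives on the second fundamental form. Writing $\frac12\Delta|A|^2=|\nabla A|^2+\sum_{i,j}a_{ij}(\Delta a_{ij})$, the main task is to compute the rough Laplacian $\Delta a_{ij}$ on the hypersurface. The $f$-minimality condition $H=\langle\overline\nabla f,\nu\rangle$ plays the same role here that $H=0$ played for minimal hypersurfaces, so I expect the Gauss and Codazzi equations to feed in exactly as in the classical case, except that the mean curvature is no longer zero and its derivatives must be tracked using Proposition \ref{prop1}.

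The key computational engine is the commutation of third covariant derivatives. Using the Codazzi equation $a_{ij,k}=a_{ik,j}+\bar R_{\nu ijk}$ (as already appears in the proof of Proposition \ref{prop1}), I would differentiate once more and commute $\nabla_k\nabla_k a_{ij}$ into $\nabla_i\nabla_j H$ plus curvature terms via the Ricci identity on $\Sigma$. This produces the intrinsic curvature terms, which I then convert to ambient curvature through the Gauss equation $R_{iljk}=\bar R_{iljk}+a_{ij}a_{lk}-a_{ik}a_{lj}$. Carrying out this reduction yields the $|A|^4$ term, the $-2\sum a_{ij}a_{ik}\bar R_{j\nu k\nu}$ and $-2\sum a_{ij}a_{lk}\bar R_{iljk}$ intrinsic-to-ambient curvature contractions, and a term of the form $\sum_{i,j}a_{ij}(\nabla^2 H)_{ij}$ coming from the Laplacian of $H$ sitting inside the derivatives of $a_{ij}$. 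Into this last term I would substitute the Hessian formula \eqref{Hessian-H} from Proposition \ref{prop1}, which is precisely where all the $\overline\nabla^3 f$ and $\overline\nabla^2 f$ terms enter.

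The step where the Bakry-\'Emery structure must be organized carefully is the passage from the raw Ricci curvature $\overline{\text{Ric}}$ to the weighted curvature $\overline{Ric}_f=\overline{\text{Ric}}+\overline\nabla^2 f$. After the Laplacian computation I will have an expression written in terms of $\overline{\text{Ric}}$ and separately in terms of Hessian-of-$f$ data; I would then absorb the $\overline\nabla^2 f$ pieces into $\overline{Ric}_f$ and the $\overline\nabla^3 f$ pieces into the covariant derivatives $(\overline{Ric}_f)_{i\nu;j}$, $(\overline{Ric}_f)_{ij;\nu}$ appearing in the statement. This is bookkeeping, but it is delicate because one must correctly identify, for instance, that the combination $2\sum a_{ij}(\overline\nabla^3 f)_{i\nu j}$ generated by the Hessian term of $H$ combines with the differentiated-Ricci terms to yield exactly $2\sum a_{ij}(\overline{Ric}_f)_{i\nu;j}$ and $-\sum a_{ij}(\overline{Ric}_f)_{ij;\nu}$, with the leftover ambient curvature derivative surfacing as $\sum a_{ij}\bar R_{i\nu j\nu;\nu}$.

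The hard part, and the place where errors are most likely, is keeping track of the index gymnastics and sign conventions when commuting the three covariant derivatives of $a_{ij}$ while simultaneously converting intrinsic quantities to ambient ones and reinserting the $f$-Hessian formula for $\nabla^2 H$. In particular, the second Bianchi identity on $M$ must be invoked to turn the derivative $\bar R_{i\nu j\nu;\nu}$ into the form appearing in the statement, and one must be consistent about the convention (already fixed earlier in the excerpt) that $\nu$ replaces the index $n+1$. I would perform the entire calculation at a point $p$ with a normal frame satisfying $\nabla_{e_i}e_j(p)=0$, exactly as in the proof of Proposition \ref{prop1}, so that tangential connection terms drop out and only the second-fundamental-form coefficients $a_{ij}$ survive when derivatives hit the frame. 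Since the final identity is tensorial and $p$ is arbitrary, validity at $p$ gives validity on all of $\Sigma$.
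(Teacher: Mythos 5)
Your proposal is correct and follows essentially the same route as the paper: the paper likewise starts from Simons' identity for $\Delta a_{ij}$ (quoted from Simons/Schoen--Simon--Yau rather than re-derived), substitutes the Hessian formula \eqref{Hessian-H} for $\nabla^2 H$ from Proposition \ref{prop1}, uses the second Bianchi identity and Ricci contractions to organize the ambient curvature terms, absorbs the $\overline{\nabla}^2 f$ and $\overline{\nabla}^3 f$ pieces into $\overline{\textrm{Ric}}_f$ and its covariant derivatives, and contracts with $a_{ij}$ in a normal frame at a point. The only point worth making explicit in your write-up is that the term $\sum_{k} a_{ij,k}f_k$ inside \eqref{Hessian-H}, once contracted with $a_{ij}$, is exactly what cancels the drift term $-\tfrac12\langle\nabla f,\nabla|A|^2\rangle$ when passing from $\Delta$ to $\Delta_f$.
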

\begin{proof} Simons \cite{S} proved the following identity (see, e.g.,\cite{SSY} (1.20)) under a local orthonormal frame $e_1,\ldots, e_n$ of $\Sigma$:
\begin{eqnarray}\label{lap-a}
\Delta a_{ij}&=&\nabla^2H(e_j,e_i)+\sum_{k=1}^n\bar{R}_{\nu kik;j}+\sum_{k=1}^n\bar{R}_{\nu ijk;k}\\
&& -H\bar{R}_{\nu ij\nu}-\overline{\textrm{Ric}}_{\nu\nu}a_{ij}
+H\sum_{k=1}^na_{ik}a_{kj}-|A|^{2}a_{ij}\nonumber\\
&&+\sum_{k,l=1}^n(a_{ik}\bar{R}_{kljl}+a_{jk}\bar{R}_{klil}+2a_{kl}\bar{R}_{lijk}).\nonumber
\end{eqnarray}
Observe that
\begin{eqnarray}\label{eq-R-1}
\sum_{k=1}^{n}\bar{R}_{\nu kik;j}&=&(\bar{\textrm{Ric}})_{\nu i;j}=(\bar{\textrm{Ric}})_{i\nu;j},\\
\label{eq-R-2}\sum_{k=1}^{n}\bar{R}_{\nu ijk;k}
&=&\sum_{k=1}^{n}\bar{R}_{jk\nu i;k}=-\sum_{k=1}^{n}\bar{R}_{jkik;\nu}-\sum_{k=1}^{n}\bar{R}_{jkk\nu;i}\\
&=&-(\bar{\textrm{Ric}})_{ij;\nu}+\bar{R}_{i\nu j\nu;\nu}+(\bar{\textrm{Ric}})_{j\nu;i},\nonumber\\
\label{eq-R-3}
\sum_{k,l=1}^{n}a_{ik}\bar{R}_{kljl}&=&\sum_{k=1}^{n}a_{ik}(\bar{\textrm{Ric}})_{kj}-\sum_{k=1}^{n}a_{ik}\bar{R}_{k\nu j\nu},\\
\label{eq-R-4}
\sum_{k,l=1}^{n}a_{jk}\bar{R}_{klil}&=&\sum_{k=1}^{n}a_{jk}(\bar{\textrm{Ric}})_{ki}-\sum_{k=1}^{n}a_{jk}\bar{R}_{k\nu i\nu}.
\end{eqnarray}
\noindent Using the same local frame as in the proof of  Prop.\ref{prop1} and substituting \eqref{Hessian-H}, \eqref{eq-R-1}, \eqref{eq-R-2}, \eqref{eq-R-3} and \eqref{eq-R-4} into \eqref{lap-a}, we have that at $p$, for $1\leq i,j\leq n$,
\begin{eqnarray}\label{lap-a-2}
\Delta a_{ij}&=&(\overline{\textrm{Ric}}_{f})_{i\nu;j}+
(\overline{\textrm{Ric}}_{f})_{j\nu;i}-
(\overline{\textrm{Ric}}_{f})_{ij;\nu}+\bar{R}_{i\nu j\nu;\nu}\\
& &+\sum_{k=1}^na_{ik}(\overline{\textrm{Ric}}_{f})_{kj}+\sum_{k=1}^na_{jk}(\overline{\textrm{Ric}}_{f})_{ki}\nonumber\\
& &-\sum_{k=1}^na_{ik}\bar{R}_{k\nu j\nu}-\sum_{k=1}^na_{jk}\bar{R}_{k\nu i\nu}-2\sum_{l,k=1}^na_{lk}\bar{R}_{likj}\nonumber\\
& &+\sum_{k=1}^na_{ij,k}f_{k}-(\overline{\textrm{Ric}}_{f})_{\nu\nu}a_{ij}-|A|^{2}a_{ij}.\nonumber
\end{eqnarray}
Multiply \eqref{lap-a-2}  by  $a_{ij}$  and take the trace. Then it holds that at $p$,
\begin{eqnarray}\label{eq-simons-1}
\frac{1}{2}\Delta_f|A|^{2}&=&\frac{1}{2}\Delta|A|^2-\frac{1}{2}\langle\nabla f,\nabla|A|^2\rangle\nonumber \\
&=& \sum_{i,j=1}^{n}a_{ij}\Delta a_{ij}+\sum_{i,j,k=1}^{n}a_{ij,k}^2-\sum_{i,j,k=1}^na_{ij}f_ka_{ij,k}\nonumber \\
&=&|\nabla A|^{2}+2\sum_{i,j,k=1}^{n}a_{ij}a_{ik}(\bar{\textrm{Ric}}_{f})_{kj}-(\bar{\textrm{Ric}}_f)_{\nu\nu}|A|^{2}-|A|^{4}\\
& &+2\sum_{i,j=1}^{n}a_{ij}(\bar{\textrm{Ric}}_{f})_{i\nu;j}-\sum_{i,j=1}^{n}a_{ij}(\bar{\textrm{Ric}}_f)_{ij;\nu}\nonumber\\
&&+\sum_{i,j=1}^{n}a_{ij}\bar{R}_{i\nu j\nu;\nu}\nonumber\\
& &-2\sum_{i,j,k=1}^{n}a_{ij}a_{ik}\bar{R}_{k\nu j\nu}-2\sum_{i,j,k,l=1}^{n}a_{ij}a_{lk}\bar{R}_{iljk}.\nonumber
\end{eqnarray}
Thus
\begin{eqnarray}\label{simons-eq1}
\frac{1}{2}\Delta_f|A|^{2}&=&|\nabla A|^{2}+2\langle A^2, \bar{\textrm{Ric}}_{f}|_{\Sigma}\rangle-(\bar{\textrm{Ric}}_f)_{\nu\nu}|A|^{2}-|A|^{4}\nonumber\\
& &+2\langle A, \overline{\nabla} (\bar{\textrm{Ric}}_{f})(\cdot,\nu, \cdot)|_{\Sigma}\rangle-\langle A, \overline{\nabla} (\bar{\textrm{Ric}}_{f})(\nu, \cdot,\cdot)|_{\Sigma}\rangle\nonumber\\
&&+\langle A, \overline{\nabla} (\bar{Rm})(\nu, \cdot,\nu,\cdot,\nu)|_{\Sigma}\rangle\\
& &-2\langle A^2, \bar{Rm}(\cdot,\nu,\cdot,\nu)|_{\Sigma}\rangle -2\sum_{i,j,k,l=1}^{n}a_{ij}a_{lk}\bar{R}_{iljk}.\nonumber
\end{eqnarray}
Since \eqref{simons-eq1} is independent of the choose of the coordinates,     \eqref{simons-eq} holds on $\Sigma$.

\end{proof}
When the ambient space $M$ has the property: $\overline{Ric}_f=C\overline{g}$, i.e., $M$ is a gradient Ricci soliton, the Simons' type equation for $f$-minimal hypersurfaces are reduced to that
\begin{cor} \label{simons-eq-grad}Let $(M^{n+1},\overline{g}, e^{-f}d\mu)$ be a smooth metric metric space satisfying $\overline{Ric}_f=C\overline{g}$, where $C$ is a constant. If $(\Sigma,g)$ is an $f$-minimal hypersurface isometrically immersed in $M$, then it holds that on $\Sigma$
\begin{eqnarray}
\frac{1}{2}\Delta_f|A|^{2}&=&|\nabla A|^{2}+C|A|^{2}-|A|^{4}
+\sum_{i,j=1}^{n}a_{ij}\bar{R}_{i\nu j\nu;\nu}\\
& &-2\sum_{i,j,k=1}^{n}a_{ij}a_{ik}\bar{R}_{j\nu k\nu}-2\sum_{i,j,k,l=1}^{n}a_{ij}a_{lk}\bar{R}_{iljk},\nonumber
\end{eqnarray}
where the notation is the same as in Theorem \ref{prop2}.
\end{cor}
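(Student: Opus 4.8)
The plan is to read off the corollary as a direct specialization of the Simons' type equation \eqref{simons-eq} in Theorem~\ref{prop2}, feeding in the two structural consequences of the soliton hypothesis $\overline{Ric}_f=C\overline{g}$. First, since this is a tensor identity valid in every direction, restricting indices to the adapted frame $\{e_1,\dots,e_n,\nu\}$ gives $(\overline{Ric}_f)_{jk}=C\delta_{jk}$ and $(\overline{Ric}_f)_{i\nu}=0$ for $1\le i,j,k\le n$, together with $(\overline{Ric}_f)_{\nu\nu}=C$. Second, because $C$ is constant and the Levi-Civita connection is metric-compatible, $\overline{\nabla}(\overline{Ric}_f)=C\,\overline{\nabla}\overline{g}=0$; hence every component of the covariant derivative of $\overline{Ric}_f$ vanishes, in particular $(\overline{Ric}_f)_{i\nu;j}=0$ and $(\overline{Ric}_f)_{ij;\nu}=0$.

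With these facts in hand I would substitute directly into \eqref{simons-eq} and collect terms. The term $2\sum_{i,j,k}a_{ij}a_{ik}(\overline{Ric}_f)_{jk}$ collapses, using $(\overline{Ric}_f)_{jk}=C\delta_{jk}$, to $2C\sum_{i,j}a_{ij}^2=2C|A|^2$, while $-(\overline{Ric}_f)_{\nu\nu}|A|^2=-C|A|^2$; these combine to the single term $C|A|^2$ appearing in the corollary. The two first-order terms $2\sum_{i,j}a_{ij}(\overline{Ric}_f)_{i\nu;j}$ and $-\sum_{i,j}a_{ij}(\overline{Ric}_f)_{ij;\nu}$ drop out by the vanishing of $\overline{\nabla}(\overline{Ric}_f)$. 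The remaining terms $|\nabla A|^2$, $-|A|^4$, $\sum_{i,j}a_{ij}\bar{R}_{i\nu j\nu;\nu}$, $-2\sum_{i,j,k}a_{ij}a_{ik}\bar{R}_{j\nu k\nu}$ and $-2\sum_{i,j,k,l}a_{ij}a_{lk}\bar{R}_{iljk}$ are untouched by the hypothesis, and reassembling yields exactly the stated identity.

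There is no genuine obstacle here beyond careful bookkeeping: the point meriting attention is that the soliton condition constrains the Bakry-\'Emery Ricci tensor but \emph{not} the full Riemann tensor $\bar{R}$, so the three curvature terms built from $\bar{R}$ (including the derivative term $\bar{R}_{i\nu j\nu;\nu}$) cannot be simplified further and must be carried over verbatim. I would also make explicit that the coefficient $2$ on the tangential Ricci term against the coefficient $-1$ on the normal Ricci term is precisely what produces the net coefficient $C$, rather than $2C$, in front of $|A|^2$, since this is the one place a careless substitution could introduce an arithmetic error.
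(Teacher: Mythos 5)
Your proposal is correct and is precisely the argument the paper intends: the corollary is stated as an immediate specialization of Theorem \ref{prop2}, obtained by substituting $(\overline{Ric}_f)_{jk}=C\delta_{jk}$, $(\overline{Ric}_f)_{\nu\nu}=C$, and $\overline{\nabla}(\overline{Ric}_f)=C\,\overline{\nabla}\overline{g}=0$ into \eqref{simons-eq}, with the tangential and normal Ricci terms combining to the net coefficient $C|A|^2$ exactly as you compute. Your remark that the soliton condition leaves the full Riemann tensor terms untouched is also the right observation; there is nothing to add.
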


Finally, in this section,  we  prove the following identity for $L_f$ operator, which is useful for the study of the eigenvalues of $L_f$ and $L_f$-index of $f$-minimal hypersurfaces.
\begin{prop}\label{aut2}
Let $(M^{n+1},\overline{g},e^{-f}d\mu)$ be a smooth metric measure space and $X$  a parallel vector field on $M$. If $(\Sigma^{n},g)$ is an  $f$-minimal hypersurface  isometrically immersed in $M$, then the function $\alpha:\Sigma\rightarrow \mathbb{R}$ defined by $\alpha=\langle X,\nu\rangle$ satisfies
\begin{eqnarray}\label{6eq-1}
\Delta_f\alpha&=&\overline{\textrm{Ric}}_{f}(X,\nu)-|A|^{2}\alpha-(\overline{\textrm{Ric}}_{f})_{\nu\nu}\alpha,\\
\label{6eq}
L_{f}\alpha&=&\overline{\textrm{Ric}}_{f}(X,\nu),
\end{eqnarray}
where the notation is the same as in Theorem \ref{prop2}.
\end{prop}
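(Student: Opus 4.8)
Since $L_f=\Delta_f+|A|^2+(\overline{\textrm{Ric}}_f)_{\nu\nu}$, the second identity \eqref{6eq} is an immediate consequence of the first one \eqref{6eq-1}: adding $(|A|^2+(\overline{\textrm{Ric}}_f)_{\nu\nu})\alpha$ to the right-hand side of \eqref{6eq-1} cancels the last two terms and leaves $\overline{\textrm{Ric}}_f(X,\nu)$. So the entire content lies in \eqref{6eq-1}, and the plan is to compute $\Delta_f\alpha$ directly in an adapted frame.

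First I would work in the same frame as in the proof of Proposition \ref{prop1}: extend a local orthonormal frame $\{e_i\}_{i=1}^n$ of $\Sigma$ by $e_{n+1}=\nu$, and at a fixed $p\in\Sigma$ arrange $\nabla_{e_i}e_j(p)=0$. Writing $X_k=\langle X,e_k\rangle$, the parallelism $\overline{\nabla}X=0$ gives $e_i\alpha=\langle X,\overline{\nabla}_{e_i}\nu\rangle=\sum_k a_{ik}X_k$, that is $\nabla_i\alpha=\sum_k a_{ik}X_k$. Differentiating once more at $p$, again using $\overline{\nabla}X=0$ together with $\langle X,\overline{\nabla}_{e_j}e_k\rangle=-a_{jk}\alpha$, yields the Hessian $(\nabla^2\alpha)(e_j,e_i)=\sum_k a_{ik,j}X_k-\alpha\sum_k a_{ik}a_{jk}$.

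Tracing over $i=j$ produces $\Delta\alpha=\sum_{i,k}a_{ik,i}X_k-|A|^2\alpha$. The divergence term is handled by the Codazzi equation $a_{ik,j}=a_{ij,k}+\bar{R}_{\nu ikj}$ used in Proposition \ref{prop1}: summing $j=i$ gives $\sum_i a_{ik,i}=\nabla_kH+(\overline{\textrm{Ric}})_{\nu k}$, so that $\Delta\alpha=\langle\nabla H,X^{\top}\rangle+\sum_k(\overline{\textrm{Ric}})_{\nu k}X_k-|A|^2\alpha$, where $X^{\top}$ is the tangential part of $X$. It then remains to pass to $\Delta_f=\Delta-\langle\nabla f,\nabla\cdot\rangle$ and to upgrade the ordinary Ricci curvature to $\overline{\textrm{Ric}}_f$. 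This is exactly where $f$-minimality enters: since $H=\langle\overline{\nabla}f,\nu\rangle=f_\nu$, formula \eqref{der-H} gives $\nabla_iH=(\overline{\nabla}^2f)_{\nu i}+\sum_k a_{ik}f_k$. The key observation is that the tangential contraction $\sum_{i,k}a_{ik}f_kX_i$ coming from $\langle\nabla H,X^{\top}\rangle$ and the term $\langle\nabla f,\nabla\alpha\rangle=\sum_{i,k}f_ia_{ik}X_k$ agree by the symmetry $a_{ik}=a_{ki}$, hence cancel, leaving precisely $\langle\nabla H,X^{\top}\rangle-\langle\nabla f,\nabla\alpha\rangle=\sum_k(\overline{\nabla}^2f)_{\nu k}X_k$. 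Combining this with $(\overline{\textrm{Ric}})_{\nu k}+(\overline{\nabla}^2f)_{\nu k}=(\overline{\textrm{Ric}}_f)_{\nu k}$ and rewriting $\sum_kX_k(\overline{\textrm{Ric}}_f)_{\nu k}$ as $\overline{\textrm{Ric}}_f(X,\nu)-\alpha(\overline{\textrm{Ric}}_f)_{\nu\nu}$ via the decomposition $X=X^{\top}+\alpha\nu$ gives \eqref{6eq-1}.

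I expect the main obstacle to be the bookkeeping in this last step: correctly establishing the Codazzi contraction $\sum_i a_{ik,i}=\nabla_kH+(\overline{\textrm{Ric}})_{\nu k}$ and recognizing the symmetric cancellation that converts $\overline{\textrm{Ric}}$ into the Bakry--\'Emery curvature $\overline{\textrm{Ric}}_f$. Everything else is a direct, if careful, computation in the normal frame at $p$, followed by the usual remark that the resulting identity is frame-independent and $p\in\Sigma$ arbitrary.
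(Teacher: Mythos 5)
Your proof is correct and follows essentially the same route as the paper: the identical adapted frame with $\nabla_{e_i}e_j(p)=0$, the same first-derivative formula $\nabla_i\alpha=\sum_k a_{ik}X_k$, the Codazzi contraction producing $\langle\nabla H,X^{\top}\rangle+(\overline{\textrm{Ric}})(\nu,X^{\top})$, and the same cancellation via $\nabla_iH=(\overline{\nabla}^2f)_{\nu i}+\sum_k a_{ik}f_k$ that upgrades $\overline{\textrm{Ric}}$ to $\overline{\textrm{Ric}}_f$. The only cosmetic difference is that you apply Codazzi after tracing the Hessian rather than before, which changes nothing of substance.
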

\begin{proof}  Choose a local field of orthonormal frame $\{e_{1},\ldots,e_{n},e_{n+1}\}$ on $M$ as in the proof of Proposition \ref{prop1}. Then  for $1\leq i\leq n$,
\begin{eqnarray}
e_i\alpha&=&\langle X,\overline{\nabla}_{e_{i}}\nu\rangle\nonumber\\
&=&\sum_{j=1}^na_{ij}\langle X,e_{j}\rangle.
\end{eqnarray}
Note $\nabla_{e_i}e_j(p)=0$, $1\leq i,j\leq n$. The Hessian of $\alpha$ at $p$ is given by
\begin{eqnarray}
(\nabla^2\alpha)(e_k,e_i)=e_ke_i\alpha&=&\sum_{j=1}^na_{ij,k}\langle X,e_{j}\rangle+\sum_{j=1}^na_{ij}\langle X,\overline{\nabla}_{e_{k}}e_{j}\rangle\nonumber\\
&=&\sum_{j=1}^na_{ij,k}\langle X,e_{j}\rangle-\sum_{j=1}^na_{ij}a_{kj}\langle X,\nu\rangle\nonumber\\
&=&\sum_{j=1}^na_{ik,j}\langle X,e_{j}\rangle +\sum_{j=1}^n\bar{R}_{\nu ijk}\langle X,e_{j}\rangle-\sum_{j=1}^na_{ij}a_{jk}\alpha.\label{eig1}
\end{eqnarray}
Take the trace in \eqref{eig1}. Then
\begin{equation}\label{eig2}
\Delta\alpha=\langle\nabla H,X\rangle+\sum_{j=1}^n(\overline{\textrm{Ric}})_{\nu j}\langle X,e_{j}\rangle-|A|^{2}\alpha.
\end{equation}
Also from \eqref{der-H} and \eqref{eig1}, we have at $p$,
\begin{eqnarray*}
\langle\nabla H,X\rangle&=&\sum_{j=1}^n(\overline{\nabla}^2{f})_{\nu j}\langle X,e_{j}\rangle+\sum_{j,k=1}^na_{jk}f_{k}\langle X,e_{j}\rangle\\
&=&\sum_{j=1}^n(\overline{\nabla}^2{f})_{\nu j}\langle X,e_{j}\rangle+\sum_{k=1}^nf_{k}\alpha_{k}\\
&=&\sum_{j=1}^n(\overline{\nabla}^2{f})_{\nu j}\langle X,e_{j}\rangle+\langle\nabla f,\nabla\alpha\rangle.
\end{eqnarray*}
Substituting the above identity into \eqref{eig2}, we have
\begin{eqnarray}
\Delta\alpha&=&\sum_{j=1}^n(\overline{\textrm{Ric}}_{f})_{\nu j}\langle X,e_{j}\rangle+\langle\nabla f,\nabla\alpha\rangle-|A|^{2}\alpha\nonumber\\
&=&\overline{\textrm{Ric}}_{f}(X,\nu)+\langle\nabla f,\nabla\alpha\rangle-|A|^{2}\alpha-(\overline{\textrm{Ric}}_{f})_{\nu\nu}\alpha, \label{eig3}
\end{eqnarray}
that is 
\begin{eqnarray}
\Delta_f\alpha&=&\overline{\textrm{Ric}}_{f}(X,\nu)-|A|^{2}\alpha-(\overline{\textrm{Ric}}_{f})_{\nu\nu}\alpha.\label{eigen-1-18}
\end{eqnarray}
Since $p$ is arbitrary and~\eqref{eigen-1-18} is independent of the frame, we have proved \eqref{6eq-1} and then  \eqref{6eq}.

\end{proof}

When the ambient manifold is gradient Ricci soliton, we obtain that
\begin{cor}\label{cor-alpha}  Let $(M^{n+1},\overline{g}, e^{-f}d\mu)$ be a smooth metric metric space satisfying $\overline{Ric}_f=C\overline{g}$, where $C$ is a constant. Suppose that $X$ is a parallel vector field on $\Sigma$. If $(\Sigma,g)$ is an $f$-minimal hypersurface isometrically immersed in $M$, then $\alpha=\langle X,\nu\rangle$ satisfies that on $\Sigma$,
\begin{equation}
L_{f}\alpha=C\alpha.
\end{equation}
\end{cor}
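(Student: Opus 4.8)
The plan is to obtain this corollary as an immediate specialization of Proposition \ref{aut2}, so that no fresh computation is needed. The starting point is identity \eqref{6eq}, which already isolates the entire geometric content: for any parallel vector field $X$ on the ambient manifold $M$ and the function $\alpha = \langle X, \nu\rangle$ on the $f$-minimal hypersurface $\Sigma$, one has $L_f\alpha = \overline{\textrm{Ric}}_f(X,\nu)$. All that remains is to evaluate the right-hand side under the additional gradient Ricci soliton hypothesis.

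First I would invoke the soliton equation $\overline{\textrm{Ric}}_f = C\overline{g}$ and contract both sides against the pair $(X,\nu)$, obtaining $\overline{\textrm{Ric}}_f(X,\nu) = C\,\overline{g}(X,\nu) = C\langle X,\nu\rangle$. By the very definition $\alpha = \langle X,\nu\rangle$ from Proposition \ref{aut2}, this equals $C\alpha$. Substituting into \eqref{6eq} then yields $L_f\alpha = C\alpha$ pointwise on $\Sigma$, which is precisely the asserted conclusion.

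There is essentially no obstacle to overcome: the corollary is a direct reduction of \eqref{6eq}, and the only input is the elementary observation that the soliton condition collapses the Bakry-\'Emery Ricci curvature evaluated on $(X,\nu)$ to the scalar multiple $C\alpha$. The one point worth flagging is an apparent slip in the hypothesis as stated: the text reads ``$X$ is a parallel vector field on $\Sigma$,'' but Proposition \ref{aut2} requires $\overline{\nabla}X = 0$ on the ambient $M$ (it is this ambient parallelism that legitimizes the derivative manipulations in that proof). I would therefore read the hypothesis as ``$X$ parallel on $M$,'' which is the only reading under which the invocation of \eqref{6eq} is valid.
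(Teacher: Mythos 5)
Your proof is correct and coincides with the paper's (implicit) argument: the corollary is obtained exactly by substituting the soliton condition $\overline{\textrm{Ric}}_f = C\overline{g}$ into identity \eqref{6eq} of Proposition \ref{aut2}, giving $L_f\alpha = \overline{\textrm{Ric}}_f(X,\nu) = C\langle X,\nu\rangle = C\alpha$. Your reading of the hypothesis as ``$X$ parallel on $M$'' is also the intended one; the paper itself applies the corollary in the proof of Theorem \ref{theo6} with $X=\frac{\partial}{\partial t}$ parallel on the ambient product manifold, confirming that ``on $\Sigma$'' in the statement is a typo.
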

\begin{example} \cite{CM3}
Let $M=\mathbb{R}^{n+1}$ and $f=\frac{|x|^{2}}{4}$. The $f$-minimal hypersurfaces are self-shrinkers.  Suppose that $\Sigma$ is a self-shrinker, then
\begin{eqnarray*}
\frac{1}{2}\Delta_fH^{2}&=&|\nabla H|^{2}+(\frac{1}{2}-|A|^{2})H^{2},\\
\frac{1}{2}\Delta_f|A|^{2}&=&|\nabla A|^{2}+(\frac{1}{2}-|A|^{2})|A|^{2},\\
L_{f}H&=&H.
\end{eqnarray*}
If $V$ is a constant vector in $\mathbb{R}^{n+1}$ and $\nu$ is the unit normal to $\Sigma$, then
\begin{equation*}\label{eig9}
L_f\langle V,\nu\rangle=\frac{1}{2}\langle V,\nu \rangle.
\end{equation*}
\end{example}


\section{$L_f$-index of  $f$-minimal hypersurfaces}\label{sec-index}

 In this section, we study the $L_{f}$-index of  closed  $f$-minimal hypersurfaces immersed in  the product manifold $\mathbb{S}^{n}(a)\times\mathbb{R}$, $n\geq 2$, with $f(x,t)=\frac{(n-1)t^2}{4a^2}$, where $(x,t)\in \mathbb{S}^{n}(a)\times\mathbb{R}$ and  $\mathbb{S}^{n}(a)$ denotes the round sphere in $\mathbb{R}^{n+1}$ of radius $a$. For simplicity of notation, we only consider $a=\sqrt{2(n-1)}$ and hence  $f=\frac{t^2}{4}$.  The cases of other $a$ are analogous.
$\mathbb{S}^{n}(\sqrt{2(n-1)})\times\mathbb{R}$ has the  metric
\begin{equation*}
\overline{g}=g_{\mathbb{S}^{n}(\sqrt{2(n-1)})}+dt^{2},
\end{equation*}
 where $g_{\mathbb{S}^{n}(\sqrt{2(n-1)})}$ denotes the canonical metric of  $\mathbb{S}^{n}(\sqrt{2(n-1)})$. 
Let $\{\overline{e}_{1},\ldots,\overline{e}_{n+1}\}$ be a local  orthonormal frame on $\mathbb{S}^{n}(\sqrt{2(n-1)})\times\mathbb{R}$.  By a straightforward computation, one has the components of the curvature tensor and Ricci curvature tensor of $\mathbb{S}^{n}(\sqrt{2(n-1)})\times\mathbb{R}$ are given by, for $1\leq i,j,k,l\leq n+1$,
\begin{eqnarray}
\overline{R}_{ijkl}&=&\frac{1}{2(n-1)}\biggr(\delta_{ik}\delta_{jl}-\delta_{il}\delta_{jk}-\langle \overline{e}_{j},\frac{\partial}{\partial t}\rangle\langle \overline{e}_{l},\frac{\partial}{\partial t}\rangle\delta_{ik} \nonumber\\
& &-\langle \overline{e}_{i},\frac{\partial}{\partial t}\rangle\langle \overline{e}_{k},\frac{\partial}{\partial t}\rangle\delta_{jl}+\langle \overline{e}_{j},\frac{\partial}{\partial t}\rangle\langle \overline{e}_{k},\frac{\partial}{\partial t}\rangle\delta_{il}+\langle \overline{e}_{i},\frac{\partial}{\partial t}\rangle\langle \overline{e}_{l},\frac{\partial}{\partial t}\rangle\delta_{jk})\biggr),\label{curva}
\end{eqnarray}
and
\begin{equation}\label{eig7}
(\overline{\textrm{Ric}})_{ik}=\frac{1}{2}\bigg(\delta_{ik}-\langle \overline{e}_{i},\frac{\partial}{\partial t}\rangle\langle \overline{e}_{k},\frac{\partial}{\partial t}\rangle\bigg).
\end{equation}
On the other hand, 
 $$\overline{\nabla}f=\frac t2\frac{\partial }{\partial t}.$$
\begin{equation}\label{hess-f}
(\overline{\nabla}^2f)_{ik}=\frac12\langle \overline{e}_{i},\frac{\partial}{\partial t}\rangle\langle \overline{e}_{k},\frac{\partial}{\partial t}\rangle.
\end{equation}
By \eqref{eig7} and \eqref{hess-f}, we have
\begin{equation}\label{ric-f}
(\overline{\textrm{Ric}})_{ik}+(\overline{\nabla}^2f)_{ik}=\frac{1}{2}\overline{g}_{ik}.
\end{equation}
Hence  $(\mathbb{S}^{n}(\sqrt{2(n-1)})\times\mathbb{R},\overline{g},e^{-f}d\mu)$ is a  smooth metric measure space with $\overline{\textrm{Ric}}_{f}=\frac12\overline{ g}$. In addition, in the theory of Ricci flow,  $(\mathbb{S}^{n}(\sqrt{2(n-1)})\times\mathbb{R},\overline{g},f)$ is a shrinking gradient solitons.

For an $f$-minimal hypersurface $\Sigma$ immersed  in $\mathbb{S}^{n}(\sqrt{2(n-1)})\times \mathbb{R}$,  
$$0=H_f=H-\frac t2\langle \frac{\partial}{\partial t},\nu\rangle=H-\frac{t}{2}\alpha,$$
 where $\alpha=\langle \frac{\partial}{\partial t},\nu\rangle$. So $\Sigma$ satisfies $$H=\frac{t\alpha}{2}.$$
 The operator $L_f$ on $\Sigma$ is that
\begin{equation}L_f=\Delta-\frac{t}{2}\langle \left(\frac{\partial}{\partial t}\right)^{T},\nabla\cdot\rangle+|A|^2+\frac{1}{2}.
\end{equation}

\begin{lemma}\label{ex-fmin-1} The slice $\mathbb{S}^{n}(\sqrt{2(n-1)})\times \{0\}$ is an $f$-minimal hypersurface in $\mathbb{S}^{n}(\sqrt{2(n-1)})\times \mathbb{R}$. Moreover a complete $f$-minimal hypersurface $\Sigma$ is immersed in a horizontal slice $\mathbb{S}^{n}(\sqrt{2(n-1)})\times\{t\}$, where  $t\in \mathbb{R}$ fixed,  if and only if $\Sigma$ is $ \mathbb{S}^{n}(\sqrt{2(n-1)})\times \{0\}$.
\end{lemma}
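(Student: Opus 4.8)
The plan is to prove the two assertions separately, in both cases exploiting that the ambient space is a Riemannian product and that $\overline{\nabla}f=\frac{t}{2}\frac{\partial}{\partial t}$.

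\emph{First} I would check that the slice $\mathbb{S}^{n}(\sqrt{2(n-1)})\times\{0\}$ is $f$-minimal. Its unit normal in the product is $\nu=\frac{\partial}{\partial t}$, and every horizontal slice $\mathbb{S}^{n}(\sqrt{2(n-1)})\times\{t_{0}\}$ is totally geodesic in the product (for horizontal $X,Y$ the covariant derivative $\overline{\nabla}_{X}Y$ stays horizontal, so its $\frac{\partial}{\partial t}$-component vanishes), whence its second fundamental form is zero and $H\equiv 0$. Since $\overline{\nabla}f=\frac{t}{2}\frac{\partial}{\partial t}$ vanishes identically on $\{t=0\}$, we get $\langle\overline{\nabla}f,\nu\rangle=0$ there, so $H_{f}=H-\langle\overline{\nabla}f,\nu\rangle=0$ and the slice satisfies the $f$-minimal equation $H=\langle\overline{\nabla}f,\nu\rangle$. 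The same computation shows that for $t_{0}\neq 0$ one has $H=0$ but $\langle\overline{\nabla}f,\nu\rangle=\frac{t_{0}}{2}\neq 0$, so no other slice is $f$-minimal; this already signals why the conclusion of the equivalence singles out $t_{0}=0$.

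\emph{Second}, for the equivalence the ``if'' direction is immediate, since $\mathbb{S}^{n}(\sqrt{2(n-1)})\times\{0\}$ is itself a slice. For the ``only if'' direction, suppose $\Sigma$ is a complete $f$-minimal hypersurface whose image lies in a fixed slice $\mathbb{S}^{n}(\sqrt{2(n-1)})\times\{t_{0}\}$. Because $\Sigma$ and the slice both have dimension $n$, the inclusion $\Sigma\hookrightarrow \mathbb{S}^{n}(\sqrt{2(n-1)})\times\{t_{0}\}$ is a local isometry; since $\Sigma$ is complete it is then a Riemannian covering map onto the connected slice, and since $\mathbb{S}^{n}(\sqrt{2(n-1)})$ is simply connected for $n\geq 2$ the covering is trivial. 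Hence $\Sigma$ equals the full slice.

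\emph{Finally} I would pin down $t_{0}$. As $\Sigma$ fills the $n$-dimensional slice, its tangent space coincides at each point with the horizontal tangent space, so $\nu=\pm\frac{\partial}{\partial t}$ and $\alpha=\langle\frac{\partial}{\partial t},\nu\rangle=\pm 1$. The slice being totally geodesic gives again $H\equiv 0$, while the $f$-minimal equation established above for this ambient space reads $H=\frac{t_{0}}{2}\alpha$. Thus $0=\frac{t_{0}}{2}\alpha=\pm\frac{t_{0}}{2}$, forcing $t_{0}=0$, so $\Sigma=\mathbb{S}^{n}(\sqrt{2(n-1)})\times\{0\}$. The one delicate point is the middle step, namely upgrading ``$\Sigma$ is immersed in the slice'' to ``$\Sigma$ is the entire slice''; this is precisely where completeness and the simple connectivity of $\mathbb{S}^{n}$ (hence the standing hypothesis $n\geq 2$) are genuinely needed. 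Everything else follows directly from the slices being totally geodesic together with $\overline{\nabla}f=\frac{t}{2}\frac{\partial}{\partial t}$.
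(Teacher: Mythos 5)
Your proposal is correct and follows essentially the same route as the paper: the slices are totally geodesic, the $f$-minimality condition reads $H_f=H-\langle\overline{\nabla}f,\nu\rangle=-\tfrac{t}{2}$ and so forces $t=0$, and completeness together with the simple connectedness of $\mathbb{S}^{n}$ upgrades the immersion into the slice to equality with the whole slice. The only cosmetic difference is in that last topological step: the paper uses the Gauss equation to get constant positive sectional curvature, hence compactness, before invoking simple connectedness, whereas you appeal directly to the standard fact that a local isometry from a complete manifold is a Riemannian covering --- the two arguments are interchangeable here.
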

\begin{proof}   The unit normal $\nu$ to $\Sigma$ satisfies $\nu=\frac{\partial}{\partial t}$ and  hence $AX=\overline{\nabla}_X\nu=0, X\in T\Sigma$. Thus $\Sigma$ is totally geodesic.  Meanwhile
\begin{equation*}
\langle\overline{\nabla}f,\nu\rangle=\frac t2,
\end{equation*}
\begin{equation}\label{slice}
H_{f}=H-\langle\overline{\nabla}f,\nu\rangle=-\frac t2.
\end{equation}
It follows that $\Sigma$ is $f$-minimal if and only if $t=0$. Further, by Gauss equation we know $\Sigma$ has the constant positive section curvature and hence is closed. Since the closed $\Sigma$ has the dimension $n$ and $\mathbb{S}^n$ is simply-connected, $\Sigma$ must be $\mathbb{S}^{n}(\sqrt{2(n-1)})\times \{0\}$.

\end{proof}

We will  prove that
\begin{lemma}\label{index-S-n}
$L_f$-ind $ (\mathbb{S}^{n}(\sqrt{2(n-1)})\times\{0\})=1$.
\end{lemma}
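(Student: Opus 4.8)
The plan is to reduce $L_f$ restricted to the slice $\Sigma_0:=\mathbb{S}^{n}(\sqrt{2(n-1)})\times\{0\}$ to an explicit shifted Laplacian, and then read off the index from the classical spectrum of the round sphere. First I would record the geometry of $\Sigma_0$ supplied by Lemma \ref{ex-fmin-1}: it is totally geodesic, so $A\equiv 0$ and $|A|^2\equiv 0$, and its unit normal is $\nu=\frac{\partial}{\partial t}$. Since $f=\frac{t^2}{4}$ vanishes identically on $\{t=0\}$, the restriction $f|_{\Sigma_0}$ is constant, so $\nabla f=0$ along $\Sigma_0$, the weight satisfies $e^{-f}\equiv 1$ there, and hence $\Delta_f=\Delta$ on $\Sigma_0$, where $\Delta$ is the Laplacian of the round metric $g_{\mathbb{S}^{n}(\sqrt{2(n-1)})}$. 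Combining this with $\overline{\textrm{Ric}}_f(\nu,\nu)=\tfrac12\,\overline{g}(\nu,\nu)=\tfrac12$, which follows from \eqref{ric-f}, the definition of $L_f$ collapses to
\[
L_f=\Delta+\tfrac12 \qquad \text{on } \Sigma_0 .
\]

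Second, because $\Sigma_0$ is closed and $e^{-f}\equiv 1$ on it, the quadratic form $B_f$ becomes the ordinary form $B_f(\phi,\phi)=\int_{\Sigma_0}\bigl(|\nabla\phi|^2-\tfrac12\phi^2\bigr)\,d\sigma$, and no passage to subdomains $\Omega$ is needed: the supremum in the definition of the index is attained on $\Sigma_0$ itself. Thus $L_f\text{-ind}(\Sigma_0)$ equals the number of eigenvalues $\mu$ of $-\Delta$ with $\mu<\tfrac12$, counted with multiplicity. I would then invoke the spectrum of $-\Delta$ on $\mathbb{S}^{n}(r)$, namely $\mu_k=k(k+n-1)/r^2$ for $k=0,1,2,\ldots$, with the $k$-th eigenspace spanned by restrictions of degree-$k$ harmonic polynomials. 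Taking $r^2=2(n-1)$ gives $\mu_0=0$ (multiplicity $1$) and $\mu_1=\frac{n}{2(n-1)}$ (multiplicity $n+1$), the remaining eigenvalues being larger. Testing against the threshold $\tfrac12$, only $\mu_0=0<\tfrac12$ qualifies, so exactly one negative direction survives and $L_f\text{-ind}(\Sigma_0)=1$.

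The only place anything could go wrong is the strict inequality $\mu_1>\tfrac12$, and this is exactly where the specific radius $\sqrt{2(n-1)}$ enters, so I would single it out as the crux. Here $\mu_1=\frac{1}{2}\cdot\frac{n}{n-1}>\tfrac12$ for every $n\geq 2$ since $\frac{n}{n-1}>1$; the radius is tuned precisely so that the first non-constant eigenvalue sits just above the threshold, which pins the index at $1$ rather than at $1+(n+1)$. As a consistency check, on $\Sigma_0$ the function $\alpha=\langle\frac{\partial}{\partial t},\nu\rangle\equiv 1$ is constant and Corollary \ref{cor-alpha} gives $L_f\alpha=\tfrac12\alpha$, confirming that the unique negative direction of $B_f$ is the constant function. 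This completes the argument for $L_f\text{-ind}(\Sigma_0)=1$.
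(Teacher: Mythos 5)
Your proof is correct and follows essentially the same route as the paper's: both reduce $L_f$ on the slice to $\Delta_{\mathbb{S}^{n}(\sqrt{2(n-1)})}+\tfrac12$ (using $|A|^2\equiv 0$, $\nabla f=0$, and $\overline{\textrm{Ric}}_f(\nu,\nu)=\tfrac12$) and then count the eigenvalues of the round sphere lying below the threshold $\tfrac12$, with only the constants qualifying. Your explicit check that $\mu_1=\frac{n}{2(n-1)}>\tfrac12$ and your observation that no passage to subdomains is needed on a closed manifold are merely more detailed versions of the paper's assertion that $\mu_k>0$ for all $k\geq 1$.
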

\begin{proof} On $\mathbb{S}^{n}(\sqrt{2(n-1)})\times\{0\}$, we have
$\nabla f=(\overline{\nabla}f)^{\top}=0$, $|A|^{2}=0$. 
Hence, 
\begin{equation}
L_{f}=\Delta_{\mathbb{S}^{n}(\sqrt{2(n-1)})}+\frac12.
\end{equation}
 Thus the eigenvalues of $L_{f}$ are 
\begin{equation*}
\mu_{k}=\lambda_{k}-\frac12,
\end{equation*}
where 
$\lambda_{k}=\frac{k(k+n-1)}{2(n-1)}, k=0,1,\ldots$
are the eigenvalues of the Laplacian $\Delta_{\mathbb{S}^{n}(\sqrt{2(n-1)})}$.
Observe that
\begin{eqnarray*}
\mu_{0}&=&-\frac12,\\
\mu_{k}&>&0,\qquad \textrm{for all} \quad k\geq1,
\end{eqnarray*}
that is, $L_{f}$ has an unique negative eigenvalue with  multiplicity  one. Therefore the $L_f$-index of  $\mathbb{S}^{n}(\sqrt{2(n-1)})\times\{0\}$ is $1$.
\end{proof}

We will prove Theorem \ref{theo6}, which says that   $\mathbb{S}^{n}(\sqrt{2(n-1)})\times\{0\}$  is the unique closed $f$-minimal hypersurface of $L_f$-index one.

\noindent {\it Proof of Theorem \ref{theo6}}.  On $\Sigma$,
\begin{equation*}
\nabla t=(\overline{\nabla}t)^{\top}=\frac{\partial}{\partial t}-\langle\frac{\partial}{\partial t},\nu\rangle\nu.
\end{equation*}
So
\begin{equation}\label{height}
|\nabla t|^{2}=1-\langle\frac{\partial}{\partial t},\nu\rangle^{2}=1-\alpha^2.
\end{equation}
\noindent Since $\Sigma$ is closed, there is a point $p\in\Sigma$ such that $t(p)=\max_{\Sigma} t$ and $|\nabla t|(p)=0$. By equation \eqref{height}, we have
\begin{equation}\label{equ6}
0=|\nabla t|^{2}(p)=1-\alpha^{2}(p).
\end{equation}
Hence $\alpha(p)=\pm1$ and so $\alpha\not\equiv0$. Since  $\frac{\partial}{\partial t}$ is a parallel vector field on $\mathbb{S}^{n}(\sqrt{2(n-1)})\times\mathbb{R}$ and $\overline{\textrm{Ric}}_{f}=\frac12\bar{g}$, Proposition \ref{aut2} implies that
\begin{equation*}
L_{f}\alpha=\overline{\textrm{Ric}}_{f}(\frac{\partial}{\partial t},\nu)=\frac12\alpha.
\end{equation*}
Thus $\alpha$ is an eigenfunction of $L_f$ with eigenvalue  $-\frac12$ and this implies that $L_f$-$\textrm{ind}(\Sigma)\geq1$. 

Now we consider the equality case. Lemma \ref{index-S-n} says that $\mathbb{S}^{n}(\sqrt{2(n-1)})\times\{0\}$ has  $L_f$-index one.
Conversely, if $L_f$-$\textrm{ind}(\Sigma)=1$, then $-\frac12$ is the first eigenvalue. Then the corresponding eigenfunction $\alpha$ cannot change sign.  We may assume that $\alpha>0$. On the other hand,  $L_{f}\alpha=\frac12\alpha$ and $\overline{\textrm{Ric}}_f=\frac12\overline{g}$ imply
\begin{equation*}
\Delta_{f}\alpha+|A|^{2}\alpha=0.
\end{equation*}
Hence
\begin{equation}
\Delta_{f}\alpha\leq0.
\end{equation}
By the maximum principle,  $\alpha$ is constant on $\Sigma$. On the other hand, by ~\eqref{equ6}, there is a point $p\in\Sigma$ such that $\alpha(p)=\pm1$. Since  $\alpha$ is positive, $\alpha\equiv1$. Hence $\nabla t=0$ and thus $\Sigma$ is in a horizontal slice $\mathbb{S}^{n}(\sqrt{2(n-1)})\times \{t\}$. By Lemma \ref{ex-fmin-1},  $\Sigma$ must be $\mathbb{S}^{n}(\sqrt{2(n-1)})\times\{0\}$.

\qed

\section{Pinching theorem}\label{sec-appl}

First, we will  derive various identities including a Simons' type equation (see \eqref{delta-3}) for $f$-minimal hypersurfaces immersed in $\mathbb{S}^{n}(\sqrt{2(n-1)})\times\mathbb{R}$. Next, we apply them to obtain a pinching result for $f$-minimal hypersurfaces. We use the same notations as in Section \ref{sec-index}.
\begin{prop}\label{lem-1}
 Let  $\Sigma$ be an $f$-minimal hypersurface immersed in the product manifold $\mathbb{S}^{n}(\sqrt{2(n-1)})\times\mathbb{R}$ with $f=\frac{t^{2}}{4}$.  Then 
\begin{eqnarray}
\label{delta-1}\frac{1}{2}\Delta_f\alpha^{2}&=&|\nabla\alpha|^{2}-|A|^{2}\alpha^{2},\\
\label{delta-2}\frac{1}{2}\Delta_fH^{2}&=&|\nabla H|^{2}-(|A|^{2}+\frac12)H^{2}+\frac12\langle\nabla\alpha^{2},\nabla f\rangle,\\
\label{delta-3}\frac{1}{2}\Delta_f|A|^{2}&=&|\nabla A|^{2}+|A|^{2}(\frac12-|A|^{2})-\frac{1}{n-1}(|\nabla\alpha|^{2}-\alpha^{2}|A|^{2})\\
& &-\frac{1}{n-1}(\alpha^{2}f-\langle\nabla\alpha^{2},\nabla f\rangle).\nonumber
\end{eqnarray}
\end{prop}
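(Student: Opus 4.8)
The plan is to specialize the three general identities proved earlier to the ambient space $\mathbb{S}^{n}(\sqrt{2(n-1)})\times\mathbb{R}$, exploiting three structural features of this space. First, by \eqref{ric-f} we have $\overline{\textrm{Ric}}_{f}=\frac12\overline{g}$, so it is a gradient shrinking soliton with $C=\frac12$ and Corollaries \ref{cor-H} and \ref{simons-eq-grad} apply. Second, the Hessian $\overline{\nabla}^{2}f=\frac12\,dt\otimes dt$ from \eqref{hess-f} is parallel, since the field $\partial_t:=\frac{\partial}{\partial t}$, hence the one-form $dt$, is parallel in the product; therefore $\overline{\nabla}^{3}f=0$. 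Third, $\mathbb{S}^{n}\times\mathbb{R}$ is locally symmetric, so $\overline{\nabla}\,\overline{Rm}=0$. I will also record the hypersurface relations used throughout: writing $t_i=\langle e_i,\partial_t\rangle=\langle e_i,\nabla t\rangle$, one has $\nabla f=\frac t2\nabla t$, $\nabla\alpha=A\nabla t$ (from $e_i\alpha=\sum_j a_{ij}t_j$, established in the proof of Proposition \ref{aut2}), $H=\frac{t\alpha}{2}$ so that $H^{2}=\alpha^{2}f$, and consequently $\langle\nabla\alpha^{2},\nabla f\rangle=2H\,A(\nabla t,\nabla t)$.

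For \eqref{delta-1} I would apply Proposition \ref{aut2} with the parallel field $X=\partial_t$. Since $\overline{\textrm{Ric}}_{f}(\partial_t,\nu)=\frac12\alpha$ and $(\overline{\textrm{Ric}}_{f})_{\nu\nu}=\frac12$, the two $\frac12\alpha$ contributions cancel and $\Delta_f\alpha=-|A|^{2}\alpha$; combining this with $\frac12\Delta_f\alpha^{2}=\alpha\Delta_f\alpha+|\nabla\alpha|^{2}$ gives \eqref{delta-1}. For \eqref{delta-2} I would start from Corollary \ref{cor-H}. Because $\overline{\nabla}^{3}f=0$, the first two traces vanish, leaving $L_fH=2\sum_{i,j}a_{ij}(\overline{\nabla}^{2}f)_{ij}=A(\nabla t,\nabla t)$ by \eqref{hess-f}. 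Hence $\Delta_fH=A(\nabla t,\nabla t)-(|A|^{2}+\frac12)H$, and feeding this into $\frac12\Delta_fH^{2}=H\Delta_fH+|\nabla H|^{2}$, together with $H\,A(\nabla t,\nabla t)=\frac12\langle\nabla\alpha^{2},\nabla f\rangle$, yields \eqref{delta-2}.

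The substantial step is \eqref{delta-3}, for which I would invoke Corollary \ref{simons-eq-grad} with $C=\frac12$. The curvature-derivative term $\sum a_{ij}\overline{R}_{i\nu j\nu;\nu}$ vanishes by local symmetry, so it remains to evaluate the two algebraic contractions against the explicit tensor \eqref{curva}. For $-2\sum a_{ij}a_{ik}\overline{R}_{j\nu k\nu}$ I use $\overline{R}_{j\nu k\nu}=\frac1{2(n-1)}\big((1-\alpha^{2})\delta_{jk}-t_jt_k\big)$ and $\sum_i(\sum_j a_{ij}t_j)^{2}=|A\nabla t|^{2}=|\nabla\alpha|^{2}$, obtaining $-\frac1{n-1}\big((1-\alpha^{2})|A|^{2}-|\nabla\alpha|^{2}\big)$. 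For $-2\sum a_{ij}a_{lk}\overline{R}_{iljk}$ I expand the six terms of \eqref{curva} and reduce each using the symmetry $a_{ij}=a_{ji}$: the $\delta$-$\delta$ terms give $H^{2}-|A|^{2}$, the two mixed $\delta$-$t$ terms give $-2H\,A(\nabla t,\nabla t)$, and the two remaining $t$-$t$ terms each collapse to $|\nabla\alpha|^{2}$, for a total of $-\frac1{n-1}\big(H^{2}-|A|^{2}-2H\,A(\nabla t,\nabla t)+2|\nabla\alpha|^{2}\big)$. Adding the two contributions, the $|A|^{2}$ pieces combine to $-\alpha^{2}|A|^{2}$ and the $|\nabla\alpha|^{2}$ pieces to $+|\nabla\alpha|^{2}$; substituting $H^{2}=\alpha^{2}f$ and $2H\,A(\nabla t,\nabla t)=\langle\nabla\alpha^{2},\nabla f\rangle$ rewrites $H^{2}-2H\,A(\nabla t,\nabla t)$ as $\alpha^{2}f-\langle\nabla\alpha^{2},\nabla f\rangle$, and together with $C|A|^{2}-|A|^{4}=|A|^{2}(\frac12-|A|^{2})$ this produces exactly \eqref{delta-3}.

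The main obstacle will be the bookkeeping in the second contraction $\sum a_{ij}a_{lk}\overline{R}_{iljk}$: the six-term curvature tensor produces six separate contractions, and one must carefully use $a_{ij}=a_{ji}$ to recognize that the two distinct-looking expressions $\sum a_{ij}a_{li}t_lt_j$ and $\sum a_{ij}a_{jk}t_it_k$ both equal $|A\nabla t|^{2}=|\nabla\alpha|^{2}$, while keeping the genuinely different quantity $A(\nabla t,\nabla t)=\langle\nabla\alpha,\nabla t\rangle$ separate. The two conceptual inputs that make the whole specialization clean are the vanishing $\overline{\nabla}\,\overline{Rm}=0$ (which removes the third-order curvature term) and $\overline{\nabla}^{3}f=0$ (which trivializes the $L_fH$ formula); everything else is contraction algebra.
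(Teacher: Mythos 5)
Your proposal is correct and follows essentially the same route as the paper's own proof: identity \eqref{delta-1} via Proposition \ref{aut2} with $X=\frac{\partial}{\partial t}$ and $\overline{\textrm{Ric}}_f=\frac12\overline{g}$, identity \eqref{delta-2} via the $\Delta_f H$ formula with $\overline{\nabla}^3f=0$, and identity \eqref{delta-3} via Corollary \ref{simons-eq-grad} with local symmetry killing the $\bar{R}_{i\nu j\nu;\nu}$ term and the contractions against \eqref{curva} evaluated exactly as you describe. The only cosmetic differences are that you invoke Corollary \ref{cor-H} where the paper uses Proposition \ref{prop1} directly, and you write $A(\nabla t,\nabla t)$ where the paper writes $\langle\nabla\alpha,\frac{\partial}{\partial t}\rangle$; these are the same quantity since $\nabla\alpha=A\nabla t$.
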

\begin{proof} Choose  a local  orthonormal frame field $\{e_i\}_{i=1}^{n+1}$  for $M$ 
so that,   restricted to $\Sigma$,  $\{e_i\}_{i=1}^{n}$ are tangent  to $\Sigma$,  and $e_{n+1}=\nu$ is the unit normal to $\Sigma$.  Recall Proposition \ref{aut2} states that
\begin{equation}\label{eqs3}
\Delta_f\alpha=\overline{\textrm{Ric}}_f(\nu,\frac{\partial}{\partial t})-|A|^{2}\alpha-\overline{\textrm{Ric}}_f(\nu,\nu)\alpha.
\end{equation}

\noindent Substituting \eqref{ric-f} in~\eqref{eqs3}, we have
\begin{equation}\label{eqs5}
\Delta_f\alpha=-|A|^{2}\alpha.
\end{equation}
\eqref{eqs5} implies  \eqref{delta-1}:
$$
\frac{1}{2}\Delta_f\alpha^{2}=|\nabla\alpha|^{2}+\alpha\Delta_f\alpha\\
=|\nabla\alpha|^{2}-|A|^{2}\alpha^{2}.$$
Now we prove \eqref{delta-2}. Note $f=\frac14t^2$. $\overline{\nabla}^3 f=0$. This and Proposition \ref{prop1} yield
\begin{eqnarray}\label{delta-2-2}
\Delta_fH&=&2\sum_{i,j=1}^{n}a_{ij}(\overline{\nabla}^2 f)_{ij}-\overline{\textrm{Ric}}_{f}(\nu,\nu)H-|A|^{2}H.
\end{eqnarray}
 Substituting \eqref{hess-f} and \eqref{ric-f} into \eqref{delta-2-2},  we have
\begin{eqnarray*}
\Delta_fH&=&\sum_{i,j=1}^{n}a_{ij}\langle e_i,\frac{\partial}{\partial t}\rangle\langle e_j,\frac{\partial}{\partial t}\rangle-\frac12 H-|A|^{2}H\\
&=&\langle\nabla\alpha,\frac{\partial}{\partial t}\rangle-\frac12 H-|A|^{2}H
\end{eqnarray*}
Then, 
\begin{eqnarray*}
\frac{1}{2}\Delta_fH^{2}&=&|\nabla H|^{2}+H\Delta_fH\\
&=&|\nabla H|^{2}+H\langle\nabla\alpha,\frac{\partial}{\partial t}\rangle-(|A|^{2}+\frac12)H^{2}\\
&=&|\nabla H|^{2}+\frac{ t\alpha}{2}\langle\nabla\alpha,\frac{\partial}{\partial t}\rangle-(|A|^{2}+\frac12)H^{2}\\
&=&|\nabla H|^{2}+\frac12\langle\nabla\alpha^{2},\nabla f\rangle-(|A|^{2}+\frac12)H^{2}.
\end{eqnarray*}
In the above, we used $H=\frac{t\alpha}{2}$ and $\overline{\nabla}f=\frac{t}{2}\frac{\partial}{\partial t}$.
Thus \eqref{delta-2} holds.
 Finally we prove \eqref{delta-3}. Since $\mathbb{S}^n(\sqrt{2(n-1)})\times\mathbb{R}$ is a symmetric space, $\overline{\nabla}R=0$.  By  the  Simons' type equation (Corollary \ref{simons-eq-grad}), it holds that
\begin{eqnarray}
\frac{1}{2}\Delta_f|A|^{2}&=&|\nabla A|^{2}+|A|^{2}(\frac12-|A|^{2})\nonumber\\
& &-2\sum_{i,j,k=1}^{n}a_{ij}a_{ik}\bar{R}_{k\nu j\nu}-2\sum_{i,j,k,l=1}^{n}a_{ij}a_{lk}\bar{R}_{iljk}.\label{eq-simon-s}
\end{eqnarray}
Substituting the curvature tensors \eqref{curva} into \eqref{eq-simon-s} and computing directly, we obtain
\begin{eqnarray*}
\frac{1}{2}\Delta_f|A|^{2}&=&|\nabla A|^{2}+|A|^{2}(\frac12-|A|^{2})
 -\frac{1}{n-1}\biggr(H^{2}-\alpha^{2}|A|^{2}\\
 & &-2H\sum_{i,j=1}^{n}a_{ij}\langle e_i,\frac{\partial}{\partial t}\rangle\langle e_j,\frac{\partial}{\partial t}\rangle+\sum_{i,j,k=1}^{n}a_{ij}a_{ik}\langle e_j,\frac{\partial}{\partial t}\rangle\langle e_k,\frac{\partial}{\partial t}\rangle\biggr).
\end{eqnarray*}
Note that the function $\alpha$ satisfies
$\alpha_i=\displaystyle\sum_{j=1}^{n}a_{ij}\langle e_j,\frac{\partial}{\partial t}\rangle.$
Hence, 
\begin{eqnarray*}
\frac{1}{2}\Delta_f|A|^{2}&=&|\nabla A|^{2}+|A|^{2}(\frac12-|A|^{2})\\
& &-\frac{1}{n-1}\bigr(H^{2}-\alpha^{2}|A|^{2}-2H\langle\nabla\alpha,\frac{\partial}{\partial t}\rangle+|\nabla\alpha|^{2}\bigr)\\
&=&|\nabla A|^{2}+|A|^{2}(\frac12-|A|^{2})-\frac{1}{n-1}(|\nabla\alpha|^{2}-\alpha^{2}|A|^{2})\\
& &-\frac{1}{n-1}(H^{2}-2H\langle\nabla\alpha,\frac{\partial}{\partial t}\rangle).
\end{eqnarray*}
Using
$\bar{\nabla}f=\frac t2\frac{\partial}{\partial t}$
and $H=\frac{ t\alpha}{2}$,
we obtain \eqref{delta-3}:
\begin{eqnarray*}
\frac{1}{2}\Delta_f|A|^{2}
&=&|\nabla A|^{2}+|A|^{2}(\frac12-|A|^{2})-\frac{1}{n-1}(|\nabla\alpha|^{2}-\alpha^{2}|A|^{2})\\
& &-\frac{1}{n-1}(\alpha^{2}f-\langle\nabla\alpha^{2},\nabla f\rangle).
\end{eqnarray*}
\end{proof}
Proposition \ref{lem-1} implies the following equations:
\begin{lemma}\label{lem-A}
If $\Sigma$ is a closed orientable $f$-minimal hypersurface immersed in $M=\mathbb{S}^{n}(\sqrt{2(n-1)})\times\mathbb{R}$, then
\begin{eqnarray}
 &&\int_{\Sigma}|\nabla\alpha|^2e^{-f}-\int_{\Sigma}\alpha^2|A|^2e^{-f}=0,\label{alpha-eq}\\
&&-\int_{\Sigma}|\nabla H|^{2}e^{-f}+\int_{\Sigma}H^{2}|A|^{2}e^{-f}+\frac14\int_{\Sigma}\alpha^{2}(1-\alpha^{2})e^{-f}=0,\label{lem-H-eq}
\end{eqnarray}
\begin{equation}
\int_{\Sigma}|\nabla A|^{2}e^{-f}+\int_{\Sigma}|A|^{2}(\frac12-|A|^{2})e^{-f}-\frac{1}{2(n-1)}\int_{\Sigma}\alpha^{2}(1-\alpha^{2})e^{-f}=0.\label{lem-A-eq}
\end{equation}
\end{lemma}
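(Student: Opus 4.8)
The plan is to derive the three integral identities in Lemma \ref{lem-A} directly from the three pointwise equations in Proposition \ref{lem-1} by integrating each against the weighted measure $e^{-f}d\sigma$ over the closed hypersurface $\Sigma$. The fundamental tool is the integration-by-parts identity for the weighted Laplacian: since $\Delta_f$ is self-adjoint with respect to $e^{-f}d\sigma$ and $\Sigma$ is closed (so there are no boundary terms), one has $\int_\Sigma \Delta_f u \, e^{-f}d\sigma = 0$ for any smooth $u$, and more generally $\int_\Sigma \langle \nabla \phi, \nabla \psi\rangle e^{-f}d\sigma = -\int_\Sigma \phi\, \Delta_f \psi \, e^{-f}d\sigma$. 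Applying $\int_\Sigma(\cdot)e^{-f}d\sigma$ to \eqref{delta-1} kills the left-hand side, immediately yielding \eqref{alpha-eq}.

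For \eqref{lem-H-eq}, I would integrate \eqref{delta-2}. The left side vanishes, leaving $0 = \int_\Sigma |\nabla H|^2 e^{-f} - \int_\Sigma (|A|^2+\tfrac12)H^2 e^{-f} + \tfrac12\int_\Sigma \langle \nabla\alpha^2,\nabla f\rangle e^{-f}$. The main manipulation here is to handle the last term: I would integrate by parts to write $\tfrac12\int_\Sigma \langle\nabla\alpha^2,\nabla f\rangle e^{-f} = -\tfrac12\int_\Sigma \alpha^2 \Delta_f f \, e^{-f}$, and then use the explicit structure on $\mathbb{S}^n(\sqrt{2(n-1)})\times\mathbb{R}$ with $f=\tfrac{t^2}{4}$ to evaluate $\Delta_f f$ on $\Sigma$. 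The remaining bookkeeping is to recognize that $H = \tfrac{t\alpha}{2}$ gives $H^2 = \tfrac{t^2\alpha^2}{4} = f\alpha^2$, so that $\tfrac12 H^2 = \tfrac12 f\alpha^2$ combines with the integrated cross term to produce the clean expression $\tfrac14\int_\Sigma \alpha^2(1-\alpha^2)e^{-f}$ after one more reduction.

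For \eqref{lem-A-eq}, I would integrate \eqref{delta-3}. Again the left side integrates to zero, and the terms $\int_\Sigma |\nabla A|^2 e^{-f}$ and $\int_\Sigma |A|^2(\tfrac12-|A|^2)e^{-f}$ carry over verbatim. The two grouped terms with the $\tfrac{1}{n-1}$ prefactor must be simplified under the integral. The term $-\tfrac{1}{n-1}\int_\Sigma(|\nabla\alpha|^2-\alpha^2|A|^2)e^{-f}$ vanishes outright by \eqref{alpha-eq}, which I have just established. For the remaining piece $-\tfrac{1}{n-1}\int_\Sigma(\alpha^2 f - \langle\nabla\alpha^2,\nabla f\rangle)e^{-f}$, I would integrate the divergence term by parts as before and combine it with $\alpha^2 f$; using $H^2=f\alpha^2$ and comparing with the already-integrated form of \eqref{delta-2}, this reduces to $\tfrac{1}{2(n-1)}\int_\Sigma \alpha^2(1-\alpha^2)e^{-f}$ with the correct sign, giving \eqref{lem-A-eq}.

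The main obstacle is purely computational rather than conceptual: correctly tracking the several integration-by-parts steps involving $\nabla f$ and $\nabla\alpha^2$ on $\Sigma$, and consistently exploiting the algebraic relation $H=\tfrac{t\alpha}{2}$ together with $f=\tfrac{t^2}{4}$ so that $H^2=f\alpha^2$, in order to convert the mixed terms into the uniform factor $\alpha^2(1-\alpha^2)$. I expect the delicate point to be ensuring the coefficients and signs on the $\tfrac{1}{n-1}$ terms in \eqref{lem-A-eq} come out exactly as stated, since this is where the cross term from \eqref{delta-2} and the gradient term from \eqref{delta-3} interact; everything else follows from the vanishing of $\int_\Sigma \Delta_f(\cdot)e^{-f}d\sigma$ on the closed manifold and the prior identity \eqref{alpha-eq}.
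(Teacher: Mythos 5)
Your proposal is correct and is essentially the paper's own proof: integrate the three identities of Proposition \ref{lem-1} over the closed $\Sigma$ against $e^{-f}d\sigma$, integrate the $\langle\nabla\alpha^{2},\nabla f\rangle$ terms by parts, and use the pointwise computation $\Delta_{f}f=\tfrac{1}{2}(1-\alpha^{2})-f$ together with $H^{2}=f\alpha^{2}$ (from $H=\tfrac{t\alpha}{2}$, $f=\tfrac{t^{2}}{4}$) to convert the mixed terms into the uniform factor $\alpha^{2}(1-\alpha^{2})$, invoking \eqref{alpha-eq} to kill the $|\nabla\alpha|^{2}-\alpha^{2}|A|^{2}$ term in the third identity. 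The only cosmetic difference is your suggestion to ``compare with the already-integrated form of \eqref{delta-2}'' in the last step; the paper simply substitutes the value of $\Delta_{f}f$ directly, so that $\alpha^{2}f+\alpha^{2}\Delta_{f}f=\tfrac{1}{2}\alpha^{2}(1-\alpha^{2})$, which is immaterial to correctness.
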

\begin{proof} \eqref{alpha-eq} can be obtained by integrating \eqref{delta-1} directly. Now we prove \eqref{lem-H-eq}. Since
\begin{eqnarray*}
\Delta f&=&tr\nabla^2f=\sum_{i=1}^n[(\overline{\nabla}^2f)_{ii}-a_{ii}f_{\nu}]\\
&=&\frac12\sum_{i=1}^{n}\langle e_{i},\frac{\partial}{\partial t}\rangle^{2}-H{f}_\nu\\
&=&\frac12(1-\alpha^{2})-\langle\overline{\nabla}f,\nu\rangle^2,
\end{eqnarray*}
\begin{eqnarray}\label{eqs9}
\Delta_ff&=&\frac12(1-\alpha^{2})-\langle\nabla f,\nabla f\rangle-\langle\overline{\nabla}f,\nu\rangle^2\\
&=&\frac12(1-\alpha^{2})-|\overline{\nabla}f|^{2}\nonumber\\
&=&\frac12(1-\alpha^{2})-f\nonumber.
\end{eqnarray}
Integrating  \eqref{delta-2} and using \eqref{eqs9},  we obtain
 \begin{eqnarray*}
&&-\int_{\Sigma}|\nabla H|^{2}e^{-f}+\int_{\Sigma}(|A|^{2}+\frac12)H^{2}e^{-f}\\
&=&\frac12\int_{\Sigma}\langle\nabla\alpha^{2},\nabla f\rangle e^{-f}\\
&=&-\frac12\int_{\Sigma}\alpha^{2}(\Delta_{f}f)e^{-f}\\
&=&-\frac14\int_{\Sigma}\alpha^{2}(1-\alpha^{2})e^{-f}+\frac12\int_{\Sigma}H^{2}e^{-f}
\end{eqnarray*}
In the above we have used $\int_\Sigma\Delta_fH^2e^{-f}=0$ and $H^2=\alpha^2f$.   Thus, we get \eqref{lem-H-eq}. Finally we prove \eqref{lem-A-eq}.
Integrating \eqref{delta-3} and using \eqref{alpha-eq} and \eqref{eqs9}, we have
\begin{eqnarray*}
\int_{\Sigma}|\nabla A|^{2}e^{-f}+\int_{\Sigma}|A|^{2}(\frac12-|A|^{2})e^{-f}&=&\frac{1}{n-1}\int_{\Sigma}(\alpha^{2}f-\langle\nabla\alpha^{2},\nabla f\rangle)e^{-f}\\
&=&\frac{1}{n-1}\int_{\Sigma}(\alpha^{2}f+\alpha^{2}\Delta_ff)e^{-f}\\
&=&\frac{1}{2(n-1)}\int_{\Sigma}\alpha^{2}(1-\alpha^{2})e^{-f}.
\end{eqnarray*}

\end{proof}

Using Lemma \ref{lem-A},  we may prove Theorem \ref{pinching}.

\noindent {\it Proof of Theorem \ref{pinching}}. Observe that, for $n\geq 3$, 
\begin{eqnarray*}
&&|A|^{2}(\frac{1}{2}-|A|^{2})-\frac{1}{2(n-1)}\alpha^{2}(1-\alpha^{2})\\&=&-(|A|^2-\frac14)^2+(\frac{1}{4})^2[1-\frac{8}{n-1}\alpha^2(1-\alpha^2)]\geq 0\\
\end{eqnarray*}
if and only if 
 \begin{equation*}
 \frac14\biggr(1-\sqrt{1-\frac{8}{n-1}\alpha^2(1-\alpha^2)}\biggr)\leq |A|^2\leq \frac14\biggr(1+\sqrt{1-\frac{8}{n-1}\alpha^2(1-\alpha^2)}\biggr).
 \end{equation*}
So  \eqref{lem-A-eq} implies that  on $\Sigma$, for $n\geq3$, 
\begin{equation*}
\nabla A\equiv0,
\end{equation*}
and 
\begin{equation*}
|A|^{2}(\frac{1}{2}-|A|^{2})-\frac{1}{2(n-1)}\alpha^{2}(1-\alpha^{2})=0.
\end{equation*}
Hence, $|A|^{2}$ and $H$ are constants. Substituting in (\ref{lem-H-eq}), we obtain
\begin{equation*}
\int_{\Sigma}|A|^{2}H^{2}e^{-f}+\frac{1}{4}\int_{\Sigma}\alpha^{2}(1-\alpha^{2})e^{-f}=0.
\end{equation*}
So
\begin{equation*}
\alpha^{2}(1-\alpha^{2})=0.
\end{equation*}
This implies that on $\Sigma$,
\begin{equation*}
\alpha\equiv0\qquad\emph{or}\qquad\alpha^2\equiv1
\end{equation*}
Since  $\Sigma$ is closed, $\alpha^2\equiv1$. Without loss of generality,   we choose $\alpha\equiv 1$.  So $\Sigma$ is in a horizontal slice $\mathbb{S}^n(\sqrt{2(n-1)})\times\{t\}$. By Lemma \ref{ex-fmin-1}, we conclude that $\Sigma$ is $\mathbb{S}^{n}(\sqrt{2(n-1)})\times\{0\}$.

\qed

\begin{bibdiv}
\begin{biblist}
\addcontentsline{toc}{chapter}{Bibliography}

\bib{CL}{article}{
   author={Cao, Huai-Dong},
   author={Li, Haizhong},
   title={A gap theorem for self-shrinkers of the mean curvature flow in
   arbitrary codimension},
   journal={Calc. Var. Partial Differential Equations},
   volume={46},
   date={2013},
   number={3-4},
   pages={879--889},
   issn={0944-2669},
   review={\MR{3018176}},
   doi={10.1007/s00526-012-0508-1},
}

\bib{CMZ1}{article}{
   author={Cheng, Xu},
    author={Mejia, Tito},
   author={Zhou, Detang},
   title={Eigenvalue estimate and compactness for $f$-minimal surfaces},
   pages={},
  journal={arXiv:1210.8448},
   volume={},
   date={2012},
   number={},
  issn={},
   review={},
   doi={},
}

\bib{CMZ2}{article}{
   author={Cheng, Xu},
    author={Mejia, Tito},
   author={Zhou, Detang},
   title={Stability and compactness for complete $f$-minimal surfaces},
   pages={},
  journal={arXiv:1210.8076},
   volume={},
   date={2012},
   number={},
  issn={},
   review={},
   doi={},
}
\bib{XZ}{article}{
   author={Cheng, Xu},
   author={Zhou, Detang},
   title={Volume estimate about shrinkers},
   journal={Proc. Amer. Math. Soc.},
   volume={141},
   date={2013},
   number={2},
   pages={687--696},
   issn={0002-9939},
   review={\MR{2996973}},
   doi={10.1090/S0002-9939-2012-11922-7},
}
		
\bib{CZ2}{article}{
   author={Cheng, Xu},
   author={Zhou, Detang},
   title={Stability properties  and gap theorem for complete $f$-minimal hypersurfaces},
   journal={Preprint.},
   volume={},
   date={2013},
   number={},
   pages={},
   issn={},
}

\bib{CM1}{article}{
   author={Colding, Tobias H.},
   author={Minicozzi, William P., II},
   title={Smooth compactness of self-shrinkers},
   journal={Comment. Math. Helv.},
   volume={87},
   date={2012},
   number={2},
   pages={463--475},
   issn={0010-2571},
   review={\MR{2914856}},
   doi={10.4171/CMH/260},
}

\bib{CM3}{article}{
   author={Colding, Tobias H.},
   author={Minicozzi, William P., II},
   title={Generic mean curvature flow I: generic singularities},
   journal={Ann. of Math. (2)},
   volume={175},
   date={2012},
   number={2},
   pages={755--833},
   issn={0003-486X},
   review={\MR{2993752}},
   doi={10.4007/annals.2012.175.2.7},
}

\bib{CM4}{book}{
   author={Colding, Tobias H.},
   author={Minicozzi, William P., II},
   title={A course in minimal surfaces},
   series={Graduate Studies in Mathematics},
   volume={121},
   publisher={American Mathematical Society},
   place={Providence, RI},
   date={2011},
   pages={xii+313},
   isbn={978-0-8218-5323-8},
   review={\MR{2780140}},
}

\bib{CM2}{article}{
   author={Colding, Tobias H.},
   author={Minicozzi, William P., II},
   title={Embedded minimal surfaces without area bounds in 3-manifolds},
   conference={
      title={Geometry and topology: Aarhus (1998)},
   },
   book={
      series={Contemp. Math.},
      volume={258},
      publisher={Amer. Math. Soc.},
      place={Providence, RI},
   },
   date={2000},
   pages={107--120},
   review={\MR{1778099 (2001i:53012)}},
}
\bib{E}{article}{
   author={Espinar, J.M.},
   title={Manifolds with density, applications and gradient Schr\"{o}dinger operators},
   pages={},
  journal={arXiv:1209.6162v6},
   volume={},
   date={2012},
   number={},
  issn={},
   review={},
   doi={},
}
\bib{F}{article}{
   author={Fischer-Colbrie, D},
   title={On complete minimal surfaces with finite Morse index in three-manifolds},
   journal={ Invent. Math.},
   volume={82},
   date={1985},
   number={},
   pages={121--132},
   issn={},
   review={},
   doi={},
}

\bib{H}{article}{
   author={Hussey, Caleb},
   title={Classification and analysis of low index mean curvature flow self-shrinkers},
   journal={arXiv:1303.0354v1 },
   volume={},
   date={},
   number={},
   pages={},
   review={},
}

\bib{LS}{article}{
   author={Le, Nam Q.},
   author={Sesum, Natasa},
   title={Blow-up rate of the mean curvature during the mean curvature flow
   and a gap theorem for self-shrinkers},
   journal={Comm. Anal. Geom.},
   volume={19},
   date={2011},
   number={4},
   pages={633--659},
   issn={1019-8385},
   review={\MR{2880211}},
}

\bib{SSY}{article}{
   author={Schoen, R.},
   author={Simon, L.},
   author={Yau, S. T.},
   title={Curvature estimates for minimal hypersurfaces},
   journal={Acta Math.},
   volume={134},
   date={1975},
   number={3-4},
   pages={275--288},
   issn={0001-5962},
   review={\MR{0423263 (54 \#11243)}},
}
\bib{S}{article}{
   author={Simons, J.},
   title={Minimal varieties in riemannian manifolds},
   journal={Ann. of Math.},
   volume={88},
   date={1968},
   number={2},
   pages={62--105},
   issn={0001-5962},
   review={\MR{0233295 (38 \#1617)}},		
}

\end{biblist}
\end{bibdiv}
\end{document}